     \newcommand{\BF}{{\mathbb {F}}}
     \newcommand{\BR}{{\mathbb {R}}}
     \newcommand{\BZ}{{\mathbb {Z}}}
     \newcommand{\CJ}{{\mathcal {J}}}
     \newcommand{\CL}{{\mathcal {L}}}
     \newcommand{\CN}{{\mathcal {N}}}
\def\-{^{-1}}
\newcommand{\bea}{\begin{eqnarray}}
\newcommand{\eea}{\end{eqnarray}}
\newcommand{\bna}{\begin{eqnarray*}}
\newcommand{\ena}{\end{eqnarray*}}
\numberwithin{equation}{section}
\newtheorem{Theorem}{Theorem}[section]
\newtheorem{Lemma}[Theorem]{Lemma}
\newtheorem{Proposition}[Theorem]{Proposition}
\newtheorem{Remark}[Theorem]{Remark}
\newcommand{\sumstar}{\sideset{}{^\star}\sum}
\begin{document}

\title{A subconvex bound for twisted $L$-functions}

\author[Qingfeng Sun and Hui Wang]{Qingfeng Sun and Hui Wang}

\address{School of Mathematics and Statistics
\\ Shandong University at Weihai \\ Weihai \\Shandong 264209 \\China}
\email{qfsun@sdu.edu.cn}

\address{School of Mathematics and Statistics
\\ Shandong University at Weihai \\ Weihai \\Shandong 264209 \\China}
\email{wh0315@mail.sdu.edu.cn}

\begin{abstract}
Let $\mathfrak{q}>2$ be a prime number, $\chi$ a primitive Dirichlet
character modulo $\mathfrak{q}$ and $f$ a primitive holomorphic cusp form or a Hecke-Maass cusp form
of level $\mathfrak{q}$
and trivial nebentypus. We prove the subconvex bound
$$
L(1/2,f\otimes \chi)\ll \mathfrak{q}^{1/2-1/12+\varepsilon},
$$
where the implicit constant depends only on the archimedean parameter of $f$ and $\varepsilon$.
The main input is a modifying
trivial delta method
developed in \cite{AHLS}.
\end{abstract}

\subjclass[2010]{11F66}

\keywords{subconvexity, Dirichlet characters, Hecke cusp forms, {$L$}-functions}

\maketitle
\section{Introduction}

Let $f$ be a Hecke cusp form of level $\mathfrak{p}$ with trivial nebentypus
and let $\chi$ be a primitive Dirichlet character modulo $\mathfrak{q}$.
The twisted $L$-function $L(s,f\otimes \chi)$ is defined by
\bna
L(s,f\otimes \chi)=\sum_{n=1}^{\infty}\frac{\lambda_f(n)\chi(n)}{n^s}
\ena
which can be continued to be an entire function with a functional equation relating
$s$ and $1-s$.
The conductor of $L(s,f\otimes \chi)$ is
$\mathfrak{Q}(f\otimes \chi)=[\mathfrak{p},\mathfrak{q}^2]$ (see Proposition 14.19 in \cite{IK}),
where $[a,b]$ denotes the least common multiple of $a$ and $b$, and
the convexity bound is $\mathfrak{Q}(f\otimes \chi)^{1/4+\varepsilon}$
for any given $\varepsilon>0$. The subconvexity problem of $L(s,f\otimes \chi)$
was first studied in Duke, Friedlander and Iwaniec \cite{DFI} for $\mathfrak{p}=1$,
in which case the current record result
is $\mathfrak{q}^{3/8+\varepsilon}$
(the so-called Burgess bound) established in Bykovski\u{i} \cite{B},
 Blomer, Harcos and Michel \cite{BHM} and Blomer and Harcos \cite{BH} successively.
Compared with the beautiful and systematic results in this case, however, the results
in the case that both $\mathfrak{p}$ and $\mathfrak{q}$ are varying (i.e., the hybrid bound),
are not so good though there are many studies. For example, for $(\mathfrak{p},\mathfrak{q})=1$,
subconvex bounds are only known for $\mathfrak{p}$ and $\mathfrak{q}$ in a hybrid range, i.e.,
$\mathfrak{p}=\mathfrak{q}^{\eta}$ for some $\eta\in (0,1)$
(see Blomer and Harcos \cite{BH}, Aggarwal, Jo and Nowland \cite{AJN} and
Hou and Chen \cite{HC}).
For the special case $\mathfrak{p}|\mathfrak{q}$, $\mathfrak{q}$ odd and squarefree,
and $\chi$ being a primitive real character,
where the positivity property $L(s,f\otimes \chi)\geq 0$ is known,
Conrey and Iwaniec \cite{CI} established the Weyl-type subconvexity bound
$\mathfrak{q}^{1/3+\varepsilon}$. This was generalized to
arbitrary characters by Petrow and Young (see \cite{PY2} and \cite{PY3}).

It is interesting to try different methods to continue studying the case
that both $\mathfrak{p}$ and $\mathfrak{q}$ are varying.
In \cite{AHLS}, for $\mathfrak{p}=1$ and $\mathfrak{q}$ prime,
Aggarwal, Holowinsky, Lin and the first author proved
the Burgess bound $L(s,f\otimes \chi)\ll_{f, \varepsilon} \mathfrak{q}^{3/8+\varepsilon}$.
In this paper, we will show how using a modifying trivial delta method
developed in \cite{AHLS} to prove a subconvex bound for
$L(1/2,f\otimes \chi)$ in the case $\mathfrak{p}=\mathfrak{q}$ and $\mathfrak{q}$ prime.
Our main result is the following.
\begin{Theorem}\label{main thm}
Let $\mathfrak{q}$ be prime.
Let $f$ be a fixed Hecke cusp form for $\rm \Gamma_0(\mathfrak{q})$ and
$\chi$ be a primitive Dirichlet character modulo $\mathfrak{q}$. For any $\varepsilon>0$,
\begin{equation*}
\begin{split}
L\left(\frac{1}{2},f\otimes \chi\right)\ll_{f,\varepsilon}
\mathfrak{q}^{1/2-1/12+\varepsilon},
\end{split}\end{equation*}
where the implicit constant depends only on the archimedean parameter of $f$ and $\varepsilon$.
\end{Theorem}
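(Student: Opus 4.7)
The plan is to start with the approximate functional equation, which reduces Theorem \ref{main thm} to showing that for any smooth bump $W$ and any $N \leq \mathfrak{q}^{1+\varepsilon}$,
\begin{equation*}
S(N) \;:=\; \sum_{n \geq 1} \lambda_f(n)\,\chi(n)\,W\!\left(\frac{n}{N}\right) \;\ll_{f,\varepsilon}\; N^{1/2}\,\mathfrak{q}^{5/12+\varepsilon}.
\end{equation*}
Since the conductor $[\mathfrak{q},\mathfrak{q}^2] = \mathfrak{q}^2$ places the transition range at $N \asymp \mathfrak{q}$, the critical case reads $S(\mathfrak{q}) \ll \mathfrak{q}^{11/12+\varepsilon}$, i.e.\ a saving of $\mathfrak{q}^{1/12}$ over the trivial bound coming from Rankin--Selberg combined with Cauchy--Schwarz.

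The next step is to invoke the modifying trivial delta method from \cite{AHLS}, inserting an identity of the schematic form
\begin{equation*}
\delta(n = m) \;\approx\; \frac{1}{KC}\sum_{\substack{c\asymp C\\ (c,\mathfrak{q})=1}}\;\sum_{k \asymp K} e\!\left(\frac{k(n-m)}{c\mathfrak{q}}\right) H(n-m;k,c),
\end{equation*}
where $C$ and $K$ are free parameters to be optimised later, and where the coprimality $(c,\mathfrak{q})=1$ is enforced so that the ensuing Voronoi summation for the level-$\mathfrak{q}$ form $f$ takes a clean shape. Applied to $S(N)$, this decouples $\lambda_f$ and $\chi$ onto two independent variables tied together only by an additive character modulo $c\mathfrak{q}$.

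I would then dualise each inner sum: GL(2) Voronoi summation on the $n$-sum (producing a dual of length $\asymp (c\mathfrak{q})^2/N$, weighted by Kloosterman-type factors modulo $c\mathfrak{q}$), and Poisson summation modulo $c\mathfrak{q}$ on the $m$-sum (producing, via the Gauss sum $\tau(\chi)$, a dual variable of length $\asymp c\mathfrak{q}/N$). Next, Cauchy--Schwarz in the auxiliary variables $(c,k)$, squaring out, and one final Poisson completion of $k$ modulo $c\mathfrak{q}$ should reduce matters to bounding a character-exponential sum modulo $c\mathfrak{q}$. By the Chinese Remainder Theorem this factors into a piece modulo $c$ handled elementarily and a piece modulo $\mathfrak{q}$; the latter should exhibit full square-root cancellation via the Deligne--Weil bound. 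Optimising $C$ and $K$ against the dual lengths, balancing the diagonal and the off-diagonal, should then produce the saving $\mathfrak{q}^{1/12}$.

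The main obstacle I anticipate is the GL(2) Voronoi step for $f$ of level $\mathfrak{q}$ twisted by an additive character modulo $c\mathfrak{q}$: the Atkin--Lehner involution at $\mathfrak{q}$ interacts non-transparently with the additive character, yielding a Kloosterman-type sum whose modulus mixes $c$ and $\mathfrak{q}$ and whose archimedean transform must be tracked precisely enough to survive the final Cauchy--Schwarz. A secondary difficulty is to check that the ultimate character-exponential sum modulo $\mathfrak{q}$ is genuinely non-degenerate, so that the Deligne--Weil bound is applicable and the claimed $\mathfrak{q}^{1/12}$ saving is realised.
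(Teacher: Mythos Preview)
Your proposal has a genuine gap: you omit the amplification step, which is the crux of the argument and the reason the exponent comes out as $1/2 - 1/12$ rather than something weaker.

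Concretely, you propose to write $\mathcal{S}(N) = \sum_n \lambda_f(n)\sum_m \chi(m)\delta(n=m)$ and then separate variables via a delta identity. But the paper does not do this. Instead it first inserts the Hecke identity $1 = \lambda_f(\ell)^2 - \lambda_f(\ell^2)$ for primes $\ell \in [L,2L]$, averaged over $\ell$, so that (after a second use of the Hecke relation) the $\lambda_f$-variable is lengthened to $m \asymp NL$ or $NL^2$ and the $\delta$-condition becomes $m = n\ell$ or $m = n\ell^2$. This extra length $L$ is precisely what allows the dual $m$-sum after Voronoi to be short enough, and the final optimisation $L = \mathfrak{q}^{1/6}$ is what produces the $\mathfrak{q}^{1/12}$ saving. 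Without an amplifier your two variables $n$ and $m$ both have length $N$, and the resulting dual lengths after Voronoi and Poisson do not balance to beat convexity in the level-$\mathfrak{q}$ situation. The paper in fact stresses (see Remark~1.4) that the whole point is to make the amplifier work \emph{without} appealing to a lower bound for $\sum_{\ell \sim L}|\lambda_f(\ell)|^2$ uniform in the level; this is achieved by splitting $\mathcal{S}(N) = \mathcal{S}_1(N) - \mathcal{S}_2(N)$ according to the two terms of the Hecke relation and treating each separately.

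Secondly, the delta identity you wrote down, with two free parameters $C$ and $K$ and a smooth weight $H$, is not the ``trivial delta'' of \cite{AHLS}. The actual identity used is simply orthogonality of additive characters,
\[
\delta(n=0) = \frac{1}{p\mathfrak{q}}\sum_{c \mid p\mathfrak{q}}\;\sideset{}{^*}\sum_{\alpha \bmod c} e\!\left(\frac{\alpha n}{c}\right), \qquad |n| < p\mathfrak{q},
\]
averaged over auxiliary primes $p \asymp P = NL^2/\mathfrak{q}$. So the moduli are divisors of $p\mathfrak{q}$, not integers coprime to $\mathfrak{q}$ in a dyadic range; in particular the dominant case $c = p\mathfrak{q}$ forces $\mathfrak{q} \mid c$, and the level-$\mathfrak{q}$ Voronoi formula is then applied with $\mathfrak{q}_2 = 1$, which is exactly the clean situation and not the obstacle you anticipate.
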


\begin{Remark}
Fouvry, Kowalski and Michel \cite{FKM} pointed that by choosing an appropriate
amplifier, their methods would
give the exponent $1/2-1/16+\varepsilon$ in Theorem 1.1 (see Remark 4.5 in \cite{FKM}).
Note that the conductor of $L\left(s,f\otimes \chi\right)$
is $\mathfrak{q}^2$ and $\mathfrak{q}^{1/2-1/12}=\mathfrak{q}^{2(1/4-1/24)}$.
The $1/24$-saving is consistent with the bound in \cite{IS}
in the sup-norm problem. This is not a coincidence as we have used the amplification
technique based on the Hecke relation $\lambda_f^2(\ell)=1+\lambda_f(\ell^2)$ ($\ell$ prime),
in which situation the generic $1/24$-saving
is the natural limit. For example, for $g$ a cuspidal
holomorphic newform of even positive weight or a cuspidal Maass newform of exact
level $q$ with $q$ square-free coprime to 6, Blomer and Khan \cite{BK} proved the subconvexity
bound $L(1/2,g)\ll q^{1/4-(1-2\vartheta)/24+\varepsilon}$, where $\vartheta$ is the exponent towards
the Ramanujan conjecture for $g$. We note that the bound in Theorem 1.1
does not depend on the Ramanujan conjecture.
\end{Remark}

\begin{Remark}
Let $\pi$ be a fixed cuspidal representation of $\rm GL_2$ over an arbitrary number field
$\mathbf{F}$ and $\chi$ be a varying Hecke
character.
Wu \cite{Wu} established a Burgess-like subconvex bound for $L(1/2,\pi\otimes \chi)$
with an explicit dependence on the analytic conductor $C(\pi)$ of $\pi$.
\end{Remark}

The trivial delta method is based on the following trivial key identity,
\bea\label{trivial delta}
\delta(n)=\frac{1}{q}\sum_{c|q}\underset{\begin{subarray}{c}
a\bmod c\\ (a,c)=1 \end{subarray}}{\sum}e\left(\frac{an}{c}\right), \quad \mbox{when}\quad  q>|n|,
\eea
where $\delta(n)$ denotes the Kronecker delta symbol.
The good point of the trivial delta method approach in \cite{AHLS} is that both
the treatment of shifted convolution sums and the use of the
Petersson/Kuznetsov formula are avoided. For the sake of exposition
we assume $f$ is a Hecke-Maass cusp form with Laplace eigenvalue $1/4+\mu_f^2$. The
holomorphic case is very similar and more simpler.
As we know, the approximate functional equation and the Rankin-Selberg estimate
(see Proposition 19.6 in \cite{DFI2})
\bea\label{Rankin-Selberg}
\sum_{n\leq X}|\lambda_f(n)|^2\ll_{\varepsilon}X(X\mathfrak{q}|\mu_f|)^{\varepsilon}
\eea
imply that
\bea\label{to S}
L\left(\frac{1}{2},f\otimes \chi \right)\ll_{\varepsilon} \mathfrak{q}^{\varepsilon}
\sup_{N}\frac{|\mathcal{S}(N)|}{\sqrt{N}}+\mathfrak{q}^{\beta/2+\varepsilon},
\eea
where the supremum is taken over $N$ in the range
$\mathfrak{q}^\beta<N<\mathfrak{q}^{1+\varepsilon}$ with $\beta>2/3$, and
\begin{equation*}
\begin{split}
\mathcal{S}(N)=\sum_{n=1}^{\infty}\lambda_f(n)\chi(n)W\left(\frac{n}{N}\right).
\end{split}
\end{equation*}
Here $W$ is a smooth bump function supported on $[1,2]$ with $W^{(j)}(x)\ll_j 1$.
The trivial bound of $\mathcal{S}(N)$ is $N^{1+\varepsilon}$ by Cauchy-Schwarz and
\eqref{Rankin-Selberg}. This is sufficient for our purpose for small $N$.
For larger $N$, we need
the delta symbol to be used as a device to separate the oscillations from
$\lambda_f(n)$ and $\chi(n)$. To break the convexity, an amplification technique is also
needed. If we proceed as in \cite{AHLS}, then $\mathcal{S}(N)$
will be transformed into the form
\bna
&&\left(\sum\limits_{\ell\in \mathcal{L}}|\lambda_f(\ell)|^2\right)^{-1}\sum_{\ell\in\mathcal{L}}\overline{\lambda_f(\ell)}
\sum_{n}\lambda_f(n)W\left(\frac{n}{N\ell}\right)
\sum_{m}\chi(m)V\left(\frac{m}{N}\right)\delta(n-m\ell)
+\mathbf{R},
\ena
where $\mathcal{L}$ denotes the set of primes $\ell$ in the dyadic interval $[L,2L]$ with
$L<\mathfrak{q}^{1/2}$ a parameter to be determined later,
$V$ is a smooth function supported on $[1/2,3]$,
constantly $1$ on $[1,2]$ and satisfying $V^{(j)}(x)\ll_j 1$,
and
\bna
\mathbf{R}=\left(\sum\limits_{\ell\in \mathcal{L}}|\lambda_f(\ell)|^2\right)^{-1}\sum\limits_{\ell\in \mathcal{L}}
\overline{\lambda_f(\ell)}
\sum_{m}\lambda_f\left(\frac{m}{\ell}\right)\chi(m)V\left(\frac{m}{N}\right).
\ena
The error term $\mathbf{R}$ arises
from the Hecke relation
\bea\label{hecke relation}
\lambda_f(m)\lambda_f(\ell)=\sum_{d|(m,\ell)}\lambda_f\left(\frac{m\ell}{d^2}\right)
 \qquad \mbox{for} \, (m\ell,\mathfrak{q})=1.
\eea
In \cite{AHLS}, $\mathbf{R}$ was bounded by the
prime number theorem for automorphic $L$-functions which states that (see Theorem 5.13 in \cite{IK})
\bna
\sum\limits_{\ell\in \mathcal{L}}|\lambda_f(\ell)|^2\asymp_f \frac{L}{\log L}.
\ena
However, in the case $f$ is of level $\mathfrak{q}$, the above bound can not be applied
since it depends on $f$. The main point of this paper is to demonstrate that a direct use of
the Hecke relation in \eqref{hecke relation} is sufficient to work with
the trivial delta method to produce a subconvex bound. More precisely,
denote $L^\star=\sum\limits_{\ell\in \mathcal{L}}1$.
Then $L^\star\asymp L/\log L$. By \eqref{hecke relation} we write
\bea\label{S(N)}
\mathcal{S}(N)
=\frac{1}{L^\star}\sum_{\ell\in\mathcal{L}}\sum_{n}\lambda_f(n)\chi(n)W\left(\frac{n}{N}\right)
\left(\lambda_f^{2}(\ell)-\lambda_f\left(\ell^2\right)\right)
=\mathcal{S}_1(N)-\mathcal{S}_2(N),
\eea
where
\bna
\mathcal{S}_1(N)=\frac{1}{L^\star}\sum_{\ell\in\mathcal{L}}
\lambda_f(\ell)
\sum_{n}\lambda_f(n)\lambda_f(\ell)\chi(n)
W\left(\frac{n}{N}\right),
\ena
and
\bna
\mathcal{S}_2(N)=\frac{1}{L^\star}\sum_{\ell\in\mathcal{L}}
\sum_{n}\lambda_f(n)\lambda_f(\ell^2)\chi(n)W\left(\frac{n}{N}\right).
\ena
In order to express
$\mathcal{S}_1(N)$ and $\mathcal{S}_2(N)$ in the form of being ready
to apply the trivial
key identity in \eqref{trivial delta}, we use the Hecke relation
in \eqref{hecke relation} and
the Rankin-Selberg estimate in \eqref{Rankin-Selberg} to write
\bea
\mathcal{S}_1(N)=\mathcal{S}_1^{\sharp}(N)
+O\left(\frac{N^{1+\varepsilon}}{L}\right),
\eea
and
\bea\label{S_2(N)}
\mathcal{S}_2(N)=\mathcal{S}_2^{\sharp}(N)
+O\left(\frac{N^{1+\varepsilon}}{L}\right),
\eea
where $L\geq N^{\vartheta}$ with $\vartheta$ being the exponent towards
the Ramanujan conjecture for $f$ (we can take $\vartheta=7/64$ by \cite{KS}), $U$ is a smooth function supported on $[1/2,9]$,
constantly $1$ on $[1,8]$ and satisfies $U^{(j)}(x)\ll_j 1$, and
\bna
\mathcal{S}_1^{\sharp}(N)=\frac{1}{L^\star}\sum_{\ell\in\mathcal{L}}\lambda_f(\ell)\sum_m
\lambda_f(m)U\left(\frac{m}{NL}\right)\sum_n
\chi(n)W\left(\frac{n}{N}\right)\delta(m-n\ell),
\ena
\bna
\mathcal{S}_2^{\sharp}(N)=\frac{1}{L^\star}\sum_{\ell\in\mathcal{L}}\sum_m
\lambda_f(m)U\left(\frac{m}{NL^2}\right)\sum_n
\chi(n)W\left(\frac{n}{N}\right)\delta(m-n\ell^2).
\ena

Following closely \cite{AHLS}, we can now apply the trivial
key identity in \eqref{trivial delta} and prove the following results.
\begin{Proposition}\label{cusp form prop}
Let $\mathfrak{q}^\beta<N<\mathfrak{q}^{1+\varepsilon}$ with $\beta>2/3$.
Then for $\mathfrak{q}^{1-\beta+\varepsilon}\ll L\ll \mathfrak{q}^{1/6-\varepsilon}$
we have
\bna
\frac{\mathcal{S}^{\sharp}_1(N)}{N^{1/2}}\ll N^\varepsilon\left(
\frac{\mathfrak{q}^{1/2}}{L^{1/2}} +\mathfrak{q}^{1/4}L^{1/2}
\right)
\ena
and
\bna
\frac{\mathcal{S}_2^\sharp(N)}{N^{1/2}}\ll N^\varepsilon
\left(\frac{\mathfrak{q}^{1/2}}{L^{1/2}} +\mathfrak{q}^{1/4}L\right)
\ena
\end{Proposition}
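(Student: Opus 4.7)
The plan is to closely follow the strategy of \cite{AHLS}: apply the trivial delta identity \eqref{trivial delta} to the constraints $m=n\ell$ and $m=n\ell^2$, dualize the $n$- and $m$-sums by Poisson and Voronoi respectively, apply Cauchy--Schwarz to drop $\lambda_f(m)$, and bound the resulting complete exponential sums via the Weil bound.

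To begin, I would insert \eqref{trivial delta} into $\mathcal{S}_1^\sharp(N)$ with a modulus $q$ of size $NL$ (legitimate because $|m-n\ell|\ll NL$) and into $\mathcal{S}_2^\sharp(N)$ with $q$ of size $NL^2$ (since $|m-n\ell^2|\ll NL^2$). This separates the variables into additive characters modulo $c\mid q$. In the $n$-sum I would then apply Poisson summation modulo $c\mathfrak{q}$; primitivity of $\chi$ produces a Gauss sum factor together with a dual sum of effective length $\asymp c\mathfrak{q}/N$. In the $m$-sum I would apply the Voronoi formula for $\lambda_f$ of level $\mathfrak{q}$, splitting $c$ into its $\mathfrak{q}$- and prime-to-$\mathfrak{q}$-parts, transforming it into a dual sum of Kloosterman type with a short effective range.

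At this stage I would apply Cauchy--Schwarz, keeping the sum over $\ell$ and the dualized $m$-variable outside the square. Using the Rankin--Selberg bound \eqref{Rankin-Selberg} to absorb $|\lambda_f(m)|^2$ reduces the problem to estimating a complete exponential sum modulo $c\mathfrak{q}$, which by the Chinese Remainder Theorem factors into an elementary sum modulo $c$ and a Kloosterman/Sali\'e-type sum modulo $\mathfrak{q}$; the latter enjoys $\sqrt{\mathfrak{q}}$-cancellation off the diagonal by the Weil bound. The diagonal contribution ($\ell=\ell'$, $a=a'$) produces the term $\mathfrak{q}^{1/2}/L^{1/2}$ common to both bounds, while the off-diagonal contribution produces $\mathfrak{q}^{1/4}L^{1/2}$ for $\mathcal{S}_1^\sharp(N)$ and $\mathfrak{q}^{1/4}L$ for $\mathcal{S}_2^\sharp(N)$, the extra factor of $L$ reflecting the larger delta-modulus $NL^2$ in the latter case.

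The principal obstacle is the careful accounting of the level $\mathfrak{q}$ throughout: specifically, the interaction between the modulus $c$ from \eqref{trivial delta} and the cuspidal level $\mathfrak{q}$ inside Voronoi, and the verification that the complete exponential sum modulo $\mathfrak{q}$ genuinely enjoys square-root cancellation uniformly in $(\ell,\ell',a,a')$ away from the diagonal; both points are technical but fit the framework of \cite{AHLS}. With the two estimates in hand, the lower constraint $L\gg \mathfrak{q}^{1-\beta+\varepsilon}$ is dictated by forcing $\mathfrak{q}^{1/2}/L^{1/2}$ to remain below the convexity threshold $\mathfrak{q}^{\beta/2}$, whereas the upper constraint $L\ll \mathfrak{q}^{1/6-\varepsilon}$ arises from balancing $\mathfrak{q}^{1/2}/L^{1/2}$ against $\mathfrak{q}^{1/4}L$ in the estimate for $\mathcal{S}_2^\sharp(N)$---which fixes the optimum at $L=\mathfrak{q}^{1/6}$ and delivers the $1/12$-saving in Theorem \ref{main thm}.
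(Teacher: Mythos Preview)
Your high-level plan (trivial delta, Poisson and Voronoi, Cauchy--Schwarz, Weil bound) matches the paper, but two steps are misstated in ways that would prevent the argument from closing. First, the modulus in \eqref{trivial delta} is not merely ``a modulus $q$ of size $NL^2$'': following \cite{AHLS}, the paper introduces an auxiliary set of primes $\mathcal{P}=\{p\asymp P\}$ with $P=N^{1+\varepsilon}L^2/\mathfrak{q}$, \emph{averages} over $p\in\mathcal{P}$, and takes $q=p\mathfrak{q}$. This structured choice forces $\mathfrak{q}\mid c$ in the dominant term $c=p\mathfrak{q}$, so that Voronoi (Lemma~\ref{voronoi}) applies with $\mathfrak{q}_2=1$ and the Poisson output in $n$ combines cleanly with $\chi$; more importantly, it supplies an extra variable $p$ to be placed inside the Cauchy--Schwarz square, without which the off-diagonal counting does not save enough.

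Second, your Cauchy--Schwarz is inverted: you say $\ell$ is kept \emph{outside} the square, yet later invoke a diagonal ``$\ell=\ell'$''. For the amplification to function, only the dual $m$-sum stays outside (so that $\sum_m|\lambda_f(m)|^2$ is handled by \eqref{Rankin-Selberg}); the sums over $\ell$, $p$, and the dual $n$ go \emph{inside}. Opening the square and then applying a \emph{second} Poisson in $m$ (which you do not mention) produces the character sum $\mathfrak{C}$ of \eqref{character sum}. This is not a Kloosterman sum but a hybrid of $\chi$ with a rational-function exponential; its square-root bound and its degenerate configurations require the case analysis of Lemma~\ref{squareroot 2} together with the explicit counting of Lemmas~\ref{B10}--\ref{B21}. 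As a minor point, the lower constraint on $L$ in Proposition~\ref{cusp form prop} arises from the requirement $P\gg 1$ (so that $\mathcal{P}$ is nonempty), not from comparison with the convexity threshold.
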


Now we return to the proof of Theorem 1.1.
Take $L=\mathfrak{q}^{1/6+\varepsilon}$ and $\beta=5/6+\varepsilon$.
By Proposition 1.2, \eqref{to S} and \eqref{S(N)}-\eqref{S_2(N)}, we conclude that
\bna
L\left(\frac{1}{2}, f\otimes \chi\right)\ll  \mathfrak{q}^{1/2-1/12+\varepsilon}.
\ena
This proves Theorem 1.1.

\begin{Remark}
One may try proving a lower bound for $\sum\limits_{\ell\in \mathcal{L}}|\lambda_f(\ell)|^2$
which does not depend on the level of $f$. Then the trivial delta method in \cite{AHLS}
may be also applied to the level aspect. However, this approach seems very difficult
if $L$ is small compared to the level of $f$. For example, Khan \cite{K} showed that an assumption
of the form $\sum\limits_{\ell\in \mathcal{L}}|\lambda_f(\ell)|^2>_{\epsilon}L^{1-\epsilon}$
for $L> \mathfrak{q}^{1/4+\eta}$ for any fixed $\eta>0$ yields a subconvexity bound for some
$\rm GL_3$$\times$$\rm GL_2$ $L$-functions in the $\mathfrak{q}$-aspect.

\end{Remark}

In subsequent sections, we are devoted to the proof of Proposition 1.2.
Since the analysis method of $\mathcal{S}_1^{\sharp}(N)$ is similar to that of $\mathcal{S}_2^{\sharp}(N)$, we will only analyze $\mathcal{S}_2^{\sharp}(N)$.

\medskip

\section{Proof of Proposition 1.2}

The proof of Proposition 1.2 is very similar to that in \cite{AHLS},
and we follow closely the proof there.
Let $P$ be a parameter and $\mathcal{P}=\{p\in [P,2P]| \;p \,\mbox{prime}\}$.
Denote $P^\star=\sum\limits_{p\in \mathcal{P}}1\asymp P/\log P$.
Let $p\in \mathcal{P}$, $m\asymp NL^2$ and $n\asymp N$.
For any given $\varepsilon>0$ and $P\mathfrak{q}\gg N^{1+\varepsilon}L^2$,
the condition $m=n\ell^2$ is equivalent to the congruence $m\equiv n\ell^2\bmod p\mathfrak{q}$.
Thus we assume
\begin{equation}\label{restriction 1}
P=N^{1+\varepsilon}L^2/\mathfrak{q}, \qquad L\gg \sqrt{\mathfrak{q}^{1+\varepsilon}/N}.
\end{equation}
Moreover, we assume $\mathcal{P}\cap \mathcal{L}=\emptyset$. Applying \eqref{trivial delta} with $q=p\mathfrak{q}$, we have
\begin{equation}\label{start}
\begin{split}
\mathcal{S}_2^{\sharp}(N)=\frac{1}{\mathfrak{q}L^\star P^\star}\sum_{\ell\in\mathcal{L}}\sum_{p\in\mathcal{P}}
\frac{1}{p}\sum_{c|p\mathfrak{q}}\quad\sideset{}{^*}\sum_{\alpha\bmod c}
\sum_n\chi(n)e\left(-\frac{\alpha n\ell^2}{c}\right)W\left(\frac{n}{N}\right)
\sum_m\lambda_f(m)e\left(\frac{\alpha m}{c}\right)
U\left(\frac{m}{NL^2}\right),
\end{split}
\end{equation}
where the $^*$ denotes the condition $(\alpha,c)=1$.

\subsection{Voronoi and Poisson summation formulas}

Let $f$ be a Hecke-Maass cusp form with Laplace eigenvalue $1/4+\mu_f^2$.
We have the following Voronoi formula for $f$ (see Theorem A.4 in \cite{KMV}).
\begin{Lemma}\label{voronoi}
For $(\alpha,c)=1$, set $\mathfrak{q}_1=(c,\mathfrak{q})$,
$\mathfrak{q}_2=\frac{\mathfrak{q}}{\mathfrak{q}_1}$ and assume that
$(\mathfrak{q}_1,\mathfrak{q}_2)=1$.
For $F\in C^{\infty}(\mathbb{R}^{+})$ a smooth function vanishing
in a neighbourhood of zero and rapidly decreasing,
\begin{equation*}\label{voronoi for tau}
\begin{split}
\sum_{m\geq 1}\lambda_f(m)e\left(\frac{\alpha m}{c}\right)F(m)
=
\frac{\eta_f(\mathfrak{q}_2)}{c\sqrt{\mathfrak{q}_2}}\sum_{\pm}
\sum_{m\geq 1}\lambda_{f\mathfrak{q}_2}(m)e\left(\mp\frac{\overline{\alpha\mathfrak{q}_2}m}{c}\right)
F^{\pm}\left(\frac{m}{\mathfrak{q}_2c^2}\right),
\end{split}
\end{equation*}
where $|\eta_f(\mathfrak{q}_2)|=1$ (in particular, $\eta_f(1)=1$),
\begin{equation*}
\lambda_{f\mathfrak{q}_2}(m)=
\begin{cases}
\lambda_f(m), \quad &\textit{ if } (m,\mathfrak{q}_1)=1,\\
\overline{\lambda_f(m)}, \quad &\textit{ if } m|\mathfrak{q}_1^\infty,
\end{cases}
\end{equation*}
and
$
F^{\pm}(y)=\int_0^\infty F(x)J_f^{\pm}\left(4\pi \sqrt{xy}\right)\mathrm{d}x
$
with
\begin{equation*}
J_f^{+} (x)= \frac{-\pi}{\sin(\pi i\mu_f)} \left(J_{2i\mu_f}(x) - J_{-2i\mu_f}(x)\right)
\end{equation*}
and
\begin{equation*}
J_f^{-} (x)= 4\varepsilon_f\cosh(\pi \mu_f)K_{2i\mu_f}(x) .
\end{equation*}
Here $\varepsilon_f$ be an eigenvalue of $f$ under the reflection operator.
Moreover,
$$F^\pm(y)\ll_A (1+|y|)^{-A}
$$
for any $A>0$.
\end{Lemma}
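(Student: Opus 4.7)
The plan is to deduce the formula from the functional equation of the additively twisted $L$-series attached to $f$, in the style of the classical derivations recalled in \cite{KMV} and \cite{IK}. First apply Mellin inversion to write $F(m)=\frac{1}{2\pi i}\int_{(\sigma)}\widetilde{F}(s)\,m^{-s}\,ds$ for $\sigma$ large enough that the defining series of $L_f(s,\alpha/c):=\sum_{m\ge 1}\lambda_f(m)\,e(\alpha m/c)\,m^{-s}$ converges absolutely; this yields
\[ \sum_{m\ge 1}\lambda_f(m)\,e\!\left(\tfrac{\alpha m}{c}\right)\!F(m)=\frac{1}{2\pi i}\int_{(\sigma)}\widetilde{F}(s)\,L_f(s,\alpha/c)\,ds, \]
so it suffices to establish a functional equation for $L_f(s,\alpha/c)$ that converts it into the shape on the right-hand side of the lemma, and then to identify the resulting Mellin integral with the stated Bessel transforms.

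To obtain that functional equation, I would decompose $c$ compatibly with $\mathfrak{q}$. Under the hypothesis $(\mathfrak{q}_1,\mathfrak{q}_2)=1$ we may write $c=\mathfrak{q}_1 d$ with $(d,\mathfrak{q}_2)=1$, split the additive character $e(\alpha m/c)$ via CRT into a product of an additive character modulo $\mathfrak{q}_1$ and one modulo $d$, and expand the latter as a linear combination of primitive Dirichlet characters of conductors dividing $d$ by Gauss-sum inversion. This realises $L_f(s,\alpha/c)$ as a linear combination of multiplicative twists $L(s,f\otimes \psi)$, each of which has a standard Godement--Jacquet functional equation. The Atkin--Lehner involution $W_{\mathfrak{q}}$ on the newform $f$ of level $\mathfrak{q}$ contributes the pseudo-eigenvalue $\eta_f(\mathfrak{q}_2)$ of modulus one, while the behaviour of the local factors at primes dividing $\mathfrak{q}_1$ is precisely what switches $\lambda_f(m)$ to $\overline{\lambda_f(m)}$ on the $\mathfrak{q}_1^{\infty}$-part of $m$. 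Recombining the multiplicative twists back into additive form produces the dual frequency $\mp\overline{\alpha\mathfrak{q}_2}/c$ and the prefactor $\eta_f(\mathfrak{q}_2)/(c\sqrt{\mathfrak{q}_2})$.

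Finally, shifting the contour from $\mathrm{Re}(s)=\sigma$ to $\mathrm{Re}(s)=1-\sigma$ crosses no poles (since $f$ is cuspidal), and the archimedean gamma-factor ratio reorganises, via standard Mellin--Barnes evaluations, into the kernel $J_f^{\pm}(4\pi\sqrt{xy})$, the two signs corresponding respectively to the $J$- and $K$-Bessel pieces dictated by the reflection-operator eigenvalue $\varepsilon_f$. The rapid decay $F^{\pm}(y)\ll_A(1+|y|)^{-A}$ then follows by repeated integration by parts, using the smoothness of $F$ and its vanishing near zero together with uniform bounds on derivatives of $J_{2i\mu_f}$ and $K_{2i\mu_f}$. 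The main obstacle is the careful bookkeeping at the ramified primes dividing $\mathfrak{q}_1$: the local Euler factor at such a prime degenerates and, through the Atkin--Lehner action, produces on the dual side exactly the complex conjugation of $\lambda_f$ on powers of these primes, which is what forces the non-standard definition of $\lambda_{f\mathfrak{q}_2}(m)$ in the statement.
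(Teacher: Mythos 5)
The paper does not prove this lemma at all: it is quoted verbatim, with the attribution ``see Theorem A.4 in [KMV]'' serving as the entire justification. There is therefore no in-paper proof to compare against, and the right benchmark is the argument in Kowalski--Michel--VanderKam's appendix, which your sketch does track at a high level: Mellin inversion reduces the claim to a functional equation for the additively twisted $L$-series, the additive character is decomposed and related to multiplicative twists whose functional equations carry the Atkin--Lehner pseudo-eigenvalue (it is $W_{\mathfrak{q}_2}$, not $W_{\mathfrak{q}}$, that produces $\eta_f(\mathfrak{q}_2)$), and the archimedean gamma-factor quotient is identified with the Bessel kernels $J_f^{\pm}$ via Mellin--Barnes. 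The ramified-prime bookkeeping you flag is indeed where the conjugation in $\lambda_{f\mathfrak{q}_2}$ comes from.

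Two cautions. First, your CRT split $c=\mathfrak{q}_1 d$ into characters modulo $\mathfrak{q}_1$ and modulo $d$ requires $(\mathfrak{q}_1,d)=1$, and the hypothesis $(\mathfrak{q}_1,\mathfrak{q}_2)=1$ only gives $(d,\mathfrak{q}_2)=1$; one can have $(\mathfrak{q}_1,d)>1$ (e.g.\ $\mathfrak{q}=5$, $c=25$). In the paper's application $\mathfrak{q}$ is prime and $c\mid p\mathfrak{q}$ is squarefree, so this is harmless, but as a proof of the lemma as stated you would need either to restrict $c$ or handle higher prime powers of $\mathfrak{q}_1$ in $c$ directly, as KMV do. Second, expanding $e(\alpha m/d)$ into Dirichlet characters produces non-primitive characters and requires splitting $m$ according to $(m,d)$; this is where the bookkeeping is genuinely heavy, and your phrase ``primitive Dirichlet characters of conductors dividing $d$'' elides it. Also, the two signs $\pm$ are not ``dictated by'' $\varepsilon_f$; they index the two Hankel-type kernels (one $J$-Bessel, one $K$-Bessel), and $\varepsilon_f$ merely enters the normalization of $J_f^{-}$. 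None of these affect the paper, since it simply invokes the result, but they are the places where a complete write-up of your outline would need real work.
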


First, we apply Lemma \ref{voronoi} with $F(x)=U\left(x/NL^2\right)$
to the $m$-sum in \eqref{start} to get
\bea\label{after voronoi1}
m\mbox{-sum}=\eta_f(\mathfrak{q}_2)\frac{NL^2}{c\sqrt{\mathfrak{q}_2}}\sum_{\pm}
\sum_{m=1}^\infty\lambda_{f\mathfrak{q}_2}(m)e\left(\mp\frac{\overline{\alpha \mathfrak{q}_2}m}{c}\right)
U^{\pm}_f\left(\frac{mNL^2}{\mathfrak{q}_2c^2}\right),
\eea
where $U_f^{\pm} (y)=
\int_0^\infty U(x)J_f^{\pm}(4\pi\sqrt{xy})\ll_A(1+|y|)^{-A}$.
Thus $m$-sum is essentially supported in
$1\leq m\ll c^2\mathfrak{q}_2N^\varepsilon/N L^2$.

Next, we apply Poisson summation to the $n$-sum in \eqref{start} to get
\bea\label{after voronoi2}
n\mbox{-sum}=
\frac{N}{[c,\mathfrak{q}]} \sum_{n\in\BZ}\left(\sum_{\beta\bmod[c,\mathfrak{q}]}\chi(\beta)
e\left(\frac{-\alpha\beta\ell^2}{c}\right)
e\left(\frac{n\beta}{[c,\mathfrak{q}]}\right)\right)
\widehat{W}\left(\frac{nN}{[c,\mathfrak{q}]}\right),
\eea
where $\widehat{W}(y)=\int_{\mathbb{R}}W(x)e(-xy)\mathrm{d}x$ is the Fourier transform
of $W$. By the rapid decay of $\widehat{W}$, we can truncate the
$n$-sum at $|n|\ll  [c,\mathfrak{q}]N^{\varepsilon}/N$.
Here
as in \cite{AHLS}, we denote $a_b=a/(a,b)$,
where $(a,b)$ is the gcd of $a$ and $b$, and
$[a,b]$ denotes the lcm of $a$ and $b$.
Note that the character sum over $\beta$ vanishes unless
$n-\alpha\ell^2 \mathfrak{q}_c\equiv0\bmod c_\mathfrak{q}$,
in which case it equals
$c_\mathfrak{q}\overline{\chi}((n-\alpha\ell^2 \mathfrak{q}_c)
\overline{c_\mathfrak{q}})g_\chi$, where $g_{\chi}$ denotes the Gauss sum.

Substituting \eqref{after voronoi1} and \eqref{after voronoi2} into
\eqref{start}, we arrive at
\begin{equation}\label{after dual summations}
\begin{split}
\mathcal{S}_2^{\sharp}(N)=&\frac{1}{\mathfrak{q}L^\star P^\star}\sum_{\ell\in\mathcal{L}}\sum_{p\in\mathcal{P}}
\frac{1}{p}\sum_{c|p\mathfrak{q}}\quad\sideset{}{^*}\sum_{\alpha\bmod c}
\bigg(\frac{Ng_\chi}{\mathfrak{q}}\underset{\begin{subarray}{c}
|n|\ll [c,\mathfrak{q}]N^{\varepsilon}/N \\ n-\alpha\ell^2 \mathfrak{q}_c\equiv0\bmod c_\mathfrak{q}
\end{subarray}}{\sum}\overline{\chi}((n-\alpha\ell^2 \mathfrak{q}_c)\overline{c_\mathfrak{q}})
\widehat{W}\left(\frac{nN}{[c,\mathfrak{q}]}\right)\bigg)\\
\times&\left(\eta_f(\mathfrak{q}_2)\frac{NL^2}{c\sqrt{\mathfrak{q}_2}}\sum_{\pm}
\sum_{1\leq m\ll c^2\mathfrak{q}_2N^\varepsilon/N L^2}\lambda_{f\mathfrak{q}_2}(m)e\left(\mp\frac{\overline{\alpha \mathfrak{q}_2}m}{c}\right)
U^{\pm}_f\left(\frac{mNL^2}{\mathfrak{q}_2c^2}\right)\right)
+O_A(N^{-A}).
\end{split}\end{equation}

Note that $c|p\mathfrak{q}$ implies that $c=1, p, \mathfrak{q}$ or $p\mathfrak{q}$.
For $c=1$, its contribution to \eqref{after dual summations} is
\begin{equation*}
\begin{split}
&\frac{1}{\mathfrak{q}L^\star P^\star}\sum_{\ell\in\mathcal{L}}\sum_{p\in\mathcal{P}}\frac{1}{p}
\frac{Ng_\chi}{\mathfrak{q}}\sum_{|n|\ll \mathfrak{q}N^{\varepsilon}/N}\overline{\chi}(n-\alpha\ell^2 \mathfrak{q})
\widehat{W}\left(\frac{nN}{\mathfrak{q}}\right)\\
&\qquad \times \eta_f(\mathfrak{q})\frac{NL^2}{\sqrt{\mathfrak{q}}}\sum_{\pm}
\sum_{1\leq m\ll \mathfrak{q}N^{\varepsilon}/NL^2}
\lambda_{f}(m)U_f^{\pm}\left(\frac{mNL^2}{\mathfrak{q}}\right)
\ll\frac{N^\varepsilon}{P}.
\end{split}
\end{equation*}
For $c=p$, its contribution to \eqref{after dual summations} is
\begin{equation*}
\begin{split}
&\frac{1}{\mathfrak{q}L^\star P^\star}\sum_{\ell\in\mathcal{L}}\sum_{p\in\mathcal{P}}\frac{1}{p}
\quad\sumstar_{\alpha\bmod p}\frac{Ng_\chi}{\mathfrak{q}}\underset{\begin{subarray}{c}
|n|\ll P\mathfrak{q}N^{\varepsilon}/N\\ n\equiv\alpha\ell^2 \mathfrak{q}(\bmod p)
\end{subarray}}
{\sum}\overline{\chi}((n-\alpha\ell^2 \mathfrak{q})\overline{p})\widehat{W}\left(\frac{nN}{p\mathfrak{q}}\right)\\
&\times\eta_f(\mathfrak{q})\frac{NL^2}{p\sqrt{\mathfrak{q}}}\sum_{\pm}
\sum_{1\leq m\ll P^2\mathfrak{q}N^{\varepsilon}/NL^2}
\lambda_f(m)e\left(\mp\frac{\overline{\alpha \mathfrak{q}}m}{p}\right)
U_f^{\pm}\left(\frac{mNL^2}{p^2\mathfrak{q}}\right)
\ll PN^\varepsilon.
\end{split}
\end{equation*}
Combining these estimates with \eqref{after dual summations}, we obtain,
\begin{equation}\label{S(N) and S(N) star}
\mathcal{S}_2^{\sharp}(N) = \mathcal{S}_2^\star(N) +\mathcal{S}_2(c=\mathfrak{q})+
O\left( PN^{\varepsilon}\right),
\end{equation}
where
\begin{equation*}
\begin{split}
\mathcal{S}_2(c=\mathfrak{q})=&\frac{N^2L^2g_\chi}{\mathfrak{q}^3L^\star P^\star}\sum_{\pm}\sum_{\ell\in\mathcal{L}}
\sum_{p\in\mathcal{P}}\frac{1}{p}\sum_{|n|\leq N^{\varepsilon}\mathfrak{q}/N}
\widehat{W}\left(\frac{nN}{\mathfrak{q}}\right)\sum_{1\leq m\ll N^{\varepsilon}\mathfrak{q}^2/NL^2}
\lambda_f(m)U_f^{\pm}\left(\frac{mNL^2}{\mathfrak{q}^2}\right)\\
&\quad\sideset{}{^*}\sum_{\alpha\bmod \mathfrak{q}}\overline{\chi}(n-\alpha\ell^2)
e\left(\mp\frac{\overline{\alpha}m}{\mathfrak{q}}\right)
\end{split}
\end{equation*}
and
\begin{equation*}
\begin{split}
\mathcal{S}_2^\star(N)=&\frac{N^2L^2g_{\chi}}{L^\star P^\star \mathfrak{q}^3}\sum_{\pm}
\sum_{1\leq m\ll N^{\varepsilon}P^2\mathfrak{q}^2/NL^2}\lambda_f(m)\sum_{\ell\in\mathcal{L}}
\sum_{p\in\mathcal{P}}\frac{\chi(p)}{p^2}\\
&\times\sum_{
1\leq |n|\ll N^{\varepsilon}P\mathfrak{q}/N\atop(n,p)=1}
\mathfrak{D}(\pm m,n,\ell,p)e\left(\mp\frac{\overline{n\mathfrak{q}}m\ell^2}{p}\right)
\widehat{W}\left(\frac{nN}{p\mathfrak{q}}\right)U_f^{\pm}
\left(\frac{mNL^2}{p^2\mathfrak{q}^2}\right)
\end{split}
\end{equation*}
with
\begin{equation}\label{D}
\begin{split}
\mathfrak{D}(m,n,\ell,p)=\sideset{}{^*}\sum_{\alpha\bmod \mathfrak{q}}\overline{\chi}(n+\alpha)
e\left(\frac{\overline{\alpha p}m\ell^2}{\mathfrak{q}}\right).
\end{split}
\end{equation}
The estimation of $\mathcal{S}_2(c=\mathfrak{q})$ is similar as that of
$\mathcal{S}_2^\star(N)$ and much more simpler. In the following, we only
estimate $\mathcal{S}_2^\star(N)$ in details.

\subsection{Cauchy-Schwarz and Poisson Summation}

Breaking the $m$-sum into dyadic segments of length $\CN_0$ and
applying Cauchy-Schwarz inequality, we have
\begin{equation*}
\begin{split}
\mathcal{S}_2^\star(N)
\ll& \frac{N^{2+\varepsilon}L}{P\mathfrak{q}^{5/2}}\sum_{\pm}
\underset{\begin{subarray}{c}1\leq\CN_0\ll N^{\varepsilon}P^2\mathfrak{q}^2/NL^2
\\ \textit{dyadic}\end{subarray}}{\sum}
\CN_0^{1/2}\mathcal{S}_2^\star(N,\CN_0)^{1/2},
\end{split}
\end{equation*}
where
\begin{equation}\label{S star}
\begin{split}
\mathcal{S}_2^\star(N,\CN_0)=&\sum_{m} \widetilde{V}\left(\frac{m}{\CN_0}\right)
\bigg|\sum_{\ell\in\mathcal{L}}\sum_{p\in\mathcal{P}}\frac{\chi(p)}{p^2}
\underset{\begin{subarray}{c}1\leq |n|\ll N^{\varepsilon}P\mathfrak{q}/N
\\ (n,p)=1\end{subarray}}{\sum}\mathfrak{D}(\pm m,n,\ell,p)\\
&\times e\left(\mp\frac{\overline{n\mathfrak{q}}m\ell^2}{p}\right)
\widehat{W}\left(\frac{nN}{p\mathfrak{q}}\right)
U_f^{\pm}\left(\frac{mNL^2}{p^2\mathfrak{q}^2}\right)\bigg|^2
\end{split}
\end{equation}
with $\widetilde{V}(y)$ a smooth positive function, $\widetilde{V}(y)\equiv 1$ if
$y\in [1,2]$.
Here we have used the Rankin-Selberg estimate in \eqref{Rankin-Selberg}.

Note that by the square-root estimate of
the character sum $\mathfrak{D}$ in \eqref{D} $\mathfrak{D}\ll \sqrt{\mathfrak{q}}$
(see \cite{AHLS}), we have the trivial bound
\begin{equation*}
\begin{split}
\mathcal{S}_2^\star(N,\CN_0)\ll N^{\varepsilon}L^2\mathfrak{q}^3\CN_0/N^2,
\end{split}
\end{equation*}
Thus for $\mathcal{N}_0\ll P^2\mathfrak{q}^{1+\varepsilon}/N $, we have
$\mathcal{S}_2^\star(N)\ll N^\varepsilon PL^2\ll P^{2+\varepsilon}$ by
\eqref{restriction 1}. We will choose $P<\mathfrak{q}^{3/8}$. Then
\begin{equation}\label{S star final}
\mathcal{S}_2^\star(N)\ll \mathfrak{q}^{3/4+\varepsilon}
+\frac{N^{2+\varepsilon}L}{P\mathfrak{q}^{5/2}}\sum_{\pm}
\underset{\begin{subarray}{c}P^2\mathfrak{q}^{1+\varepsilon}/N\leq\CN_0\ll N^{\varepsilon}P^2\mathfrak{q}^2/NL^2
\\ \textit{dyadic}\end{subarray}}{\sum}\CN_0^{1/2}\mathcal{S}_2^\star(N,\CN_0)^{1/2}.
\end{equation}

Opening the square in \eqref{S star} and switching the order of summations,
we get
\begin{equation}\label{before truncate}
\begin{split}
\mathcal{S}_2^\star(N, \mathcal{N}_0)&=\sum_{\ell_1\in\mathcal{L}}\sum_{\ell_2\in\mathcal{L}}
\sum_{p_1\in\mathcal{P}}\sum_{p_2\in\mathcal{P}}\frac{\chi(p_1)\overline{\chi}(p_2)}{(p_1p_2)^2}
\underset{\begin{subarray}{c}1\leq |n_1|\ll N^{\varepsilon}P\mathfrak{q}/N \\ (n_1,p_1)=1\end{subarray}}{\sum}\quad
\underset{\begin{subarray}{c}1\leq |n_2|\ll N^{\varepsilon}P\mathfrak{q}/N \\ (n_2,p_2)=1\end{subarray}}{\sum}\\
&\widehat{W}\left(\frac{n_1N}{p_1\mathfrak{q}}\right)\overline{\widehat{W}\left(\frac{n_2N}{p_2\mathfrak{q}}\right)}
\sideset{}{^*}\sum_{\alpha_1\bmod \mathfrak{q}}\overline{\chi}(n_1+\alpha_1)
\sideset{}{^*}\sum_{\alpha_2\bmod \mathfrak{q}}\chi(n_2+\alpha_2)\times\mathbf{T},
\end{split}
\end{equation}
with
\bna
\mathbf{T}&=&\sum_{m=1}^{\infty}\widetilde{V}\left(\frac{m}{\mathcal{N}_0}\right)
U_f^{\pm}\left(\frac{mNL^2}{p_1^2\mathfrak{q}^2}\right)
\overline{U_f^{\pm}\left(\frac{mNL^2}{p_2^2\mathfrak{q}^2}\right)}
\nonumber\\&&\times
e\left(\mp\frac{\overline{n_1\mathfrak{q}}m\ell_1^2}{p_1}\right)
e\left(\pm\frac{\overline{n_2\mathfrak{q}}m\ell_2^2}{p_2}\right)
e\left(\pm\frac{\overline{\alpha_1p_1}m\ell_1^2}{\mathfrak{q}}
\mp\frac{\overline{\alpha_2p_2}m\ell_2^2}{\mathfrak{q}}\right).
\ena
In the following we only consider the $+$ case,
and the other case can be analyzed similarly.
Applying Poisson summation with modulus $p_1p_2\mathfrak{q}$
to $\mathbf{T}$, we get
\begin{equation*}\label{n sum after poisson}
\begin{split}
\mathbf{T}=\frac{\mathcal{N}_0}{p_1p_2\mathfrak{q}}\sum_{m}
\sum_{b\bmod{p_1p_2\mathfrak{q}}}e\left(-\frac{\overline{n_1\mathfrak{q}}b\ell_1^2}{p_1}\right)
e\left(\frac{\overline{n_2\mathfrak{q}}b\ell_2^2}{p_2}\right)
e\left(\frac{\overline{\alpha_1p_1}\ell_1^2-\overline{\alpha_2p_2}\ell_2^2}{\mathfrak{q}}b\right)
e\left(\frac{bm}{p_1p_2\mathfrak{q}}\right)\, \CJ\left(\frac{m\mathcal{N}_0}{p_1p_2\mathfrak{q}}\right),
\end{split}
\end{equation*}
where
\begin{equation*}\label{the integral J}
\begin{split}
\mathcal{J}\left(\frac{m\mathcal{N}_0}{p_1p_2\mathfrak{q}}\right):=
\int_{\BR} \widetilde{V}(x)
U_f^{+}\left(\frac{x\mathcal{N}_0NL^2}{p_1^2\mathfrak{q}^2}\right)
\overline{U_f^{+}\left(\frac{x\mathcal{N}_0NL^2}{p_2^2\mathfrak{q}^2}\right)}
e\left(-\frac{m\mathcal{N}_0x}{p_1p_2\mathfrak{q}}\right)
\mathrm{d}x.
\end{split}
\end{equation*}
The integral $\CJ\left(\frac{m\mathcal{N}_0}{p_1p_2\mathfrak{q}}\right)$
gives arbitrarily power savings in $N$
if $|m|\gg  P^2\mathfrak{q}^{1+\varepsilon}/\mathcal{N}_0$.
Hence we can truncate the dual $m$-sum at
$|m|\ll N^{\varepsilon}P^2\mathfrak{q}/\mathcal{N}_0$ at the cost of a negligible error.
For smaller values of $m$, we use the trivial bound $\CJ\left(\frac{m\mathcal{N}_0}{p_1p_2\mathfrak{q}}\right)\ll 1$.
Since $(p_1p_2,\mathfrak{q})=1$, the character sum over $b$ factors as
\bna
\sum_{b\bmod{p_1p_2}}
e\left(\frac{(-\overline{n_1}\ell_1^2p_2 +\overline{n_2}\ell_2^2p_1+m)
\overline{\mathfrak{q}}b}{p_1p_2}\right)
\sum_{b\bmod \mathfrak{q}}
e\left(\frac{(\overline{\alpha_1}\ell_1^2p_2-\overline{\alpha_2} \ell_2^2p_1+m)
\overline{p_1p_2}b}{\mathfrak{q}}\right)\,
\ena
which vanishes until $-\overline{n_1}\ell_1^2p_2
+\overline{n_2}\ell_2^2p_1+m\equiv0(\bmod \;p_1p_2)$ and
$\overline{\alpha_1}\ell_1^2p_2-\overline{\alpha_2} \ell_2^2p_1+m\equiv 0
\bmod{\mathfrak{q}}$,
in which case it equals $p_1p_2\mathfrak{q}$. Therefore,
\begin{equation*}
\begin{split}
\mathbf{T}
=&\mathcal{N}_0\underset{\begin{subarray}{c}
|m|\ll N^{\varepsilon}P^2\mathfrak{q}/\CN_0\\ -\overline{n_1}\ell_1^2p_2
+\overline{n_2}\ell_2^2p_1+m\equiv0(\bmod p_1p_2) \\
\overline{\alpha_1}\ell_1^2p_2-\overline{\alpha_2} \ell_2^2p_1+m\equiv 0\bmod{\mathfrak{q}}\end{subarray}}{\sum}
\CJ\left(\frac{m\mathcal{N}_0}{p_1p_2\mathfrak{q}}\right)+O_A(N^{-A}).
\end{split}
\end{equation*}

Substituting the above into \eqref{before truncate},
\begin{equation*}
\begin{split}
\mathcal{S}_2^\star(N, \mathcal{N}_0)\ll &\frac{\CN_0}{P^4}\sum_{\ell_1\in\mathcal{L}}
\sum_{\ell_2\in\mathcal{L}}\sum_{p_1\in\mathcal{P}}\sum_{p_2\in\mathcal{P}}
\underset{\begin{subarray}{c}1\leq |n_1|\ll R
\\ (n_1,p_1)=1\end{subarray}}{\sum}\quad
\underset{\begin{subarray}{c}1\leq |n_2|\ll R\quad
\\ (n_2,p_2)=1\end{subarray}}{\sum}
\underset{\begin{subarray}{c}
|m|\ll P^2\mathfrak{q}^{1+\varepsilon}/\CN_0
\\ -\overline{n_1}\ell_1^2p_2 +\overline{n_2}\ell_2^2p_1+m\equiv0(\bmod p_1p_2) \end{subarray}}{\sum}
\left|\mathfrak{C}\right| + O_A\left(N^{-A}\right),
\end{split}
\end{equation*}
where
\begin{equation}\label{character sum}
\mathfrak{C}=\sideset{}{^*}\sum_{\alpha\bmod \mathfrak{q}}\overline{\chi}(n_1+\alpha)
\chi\bigg(n_2+\ell_2^2p_1(\overline{\overline{\alpha}\ell_1^2p_2+m})\bigg),
\end{equation}
and
\begin{equation*}
\begin{split}
R:=N^{\varepsilon}P\mathfrak{q}/N.
\end{split}
\end{equation*}
Then,
\bea\label{s2NN0}
\mathcal{S}_2^\star(N,\CN_0) \ll \mathcal{S}_0^{\flat}(N,\CN_0) +
\mathcal{S}_1^{\flat}(N,\CN_0) + O_A\left(N^{-A}\right),
\eea
where
\begin{equation*}
\mathcal{S}_0^{\flat}(N,\CN_0) = \frac{\CN_0}{P^4} \sum_{\ell\in\CL}
\sum_{p_1\in\mathcal{P}}\sum_{p_2\in\mathcal{P}}
\underset{\begin{subarray}{c} 1\leq |n_1|\ll R \\ (n_1,p_1)=1\end{subarray}}{\sum}\quad
\underset{\begin{subarray}{c} 1\leq |n_2|\ll R \\ (n_2,p_2)=1\end{subarray}}{\sum}\quad
\underset{\begin{subarray}{c}
|m|\ll P^2\mathfrak{q}^{1+\varepsilon}/\CN_0 \\ -\overline{n_1}\ell^2 p_2 +
\overline{n_2}\ell^2 p_1+m\equiv0(\bmod p_1p_2) \end{subarray}}{\sum} \left|\mathfrak{C}\right|,
\end{equation*}
and
\begin{equation*}
\begin{split}
\mathcal{S}_1^{\flat}(N, \mathcal{N}_0)= \frac{\CN_0}{P^4} \mathop{\sum_{\ell_1\in\CL}
\sum_{\ell_2\in\CL}}_{\ell_1\neq \ell_2} \sum_{p_1\in\mathcal{P}}
\sum_{p_2\in\mathcal{P}}\underset{\begin{subarray}{c} 1\leq|n_1|\ll R \\ (n_1,p_1)=1\end{subarray}}{\sum}\quad
\underset{\begin{subarray}{c} 1\leq|n_2|\ll R \\ (n_2,p_2)=1\end{subarray}}{\sum}\quad
\underset{\begin{subarray}{c}|m|\ll P^2\mathfrak{q}^{1+\varepsilon}/\CN_0 \\
-\overline{n_1}\ell_1^2 p_2 +\overline{n_2}\ell_2^2 p_1+m\equiv0(\bmod p_1p_2) \end{subarray}}{\sum}
\left|\mathfrak{C}\right|.
\end{split}
\end{equation*}

To estimate $\mathfrak{C}$, we quote the following results (see \cite{AHLS}).
\begin{Lemma}\label{squareroot 2}
Let $\mathfrak{q}>3$ be a prime and we define
\begin{eqnarray*}
\mathfrak{C}=\sum_{z\in \mathbb{F}_\mathfrak{q}^{\times}
\atop (m+\gamma\overline{z},\mathfrak{q})=1}\overline{\chi}(n_1+z)
\chi\bigg(n_2+\alpha(\overline{m+\gamma\overline{z}})\bigg), \qquad (\alpha\gamma,\mathfrak{q})=1,\quad m,n_1,n_2,\alpha,\gamma\in \mathbb{Z}.
\end{eqnarray*}
Suppose that $(n_1n_2,\mathfrak{q})=1$. If $\mathfrak{q}|m$, we have
\begin{eqnarray*}
\mathfrak{C}=\chi(\alpha\overline{\gamma})
R_\mathfrak{q}(n_2-n_1\alpha\overline{\gamma})-\chi(n_2\overline{n_1}),
\end{eqnarray*}
where $R_\mathfrak{q}(a)=\sum_{z\in \mathbb{F}_\mathfrak{q}^{\times}}e(az/\mathfrak{q})$ is the Ramanujan sum.
If $\mathfrak{q}\nmid  m$ and at least one of $n_1-\overline{m}\gamma$ and $n_2+\overline{m}\alpha$ is
nonzero in $\mathbb{F}_\mathfrak{q}$, then
\begin{equation*}
\mathfrak{C}\ll \mathfrak{q}^{1/2}.
\end{equation*}
Finally, if $m\neq0$ and $n_1-\overline{m}\gamma=n_2+\overline{m}\alpha=0$ in $\BF_\mathfrak{q}$, then
\[
 \mathfrak{C}=
 \begin{cases}
 -\chi(mn_2\overline{\gamma}) \quad &\text{ if } \chi \text{ is not a quadratic character}, \\
 \chi(\overline{m}n_2\gamma)(\mathfrak{q}-1) \quad &\text{ if } \chi \text{ is a quadratic character}.
 \end{cases}
\]
\end{Lemma}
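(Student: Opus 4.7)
The plan is to rewrite $\mathfrak{C}$ as a multiplicative character sum in a single rational variable and then analyze it through the zero/pole structure of an auxiliary rational function. A direct manipulation (using $m+\gamma\bar z=(mz+\gamma)/z$ to clear inverses inside $\chi$) gives
\[
\mathfrak{C}=\sum_{z}\chi(R(z)),\qquad R(z)=\frac{(n_2m+\alpha)\,z+n_2\gamma}{(n_1+z)(mz+\gamma)},
\]
the sum being over $z\in\BF_\mathfrak{q}^\times$ with the poles of $R$ removed. The three conclusions of the lemma correspond to three regimes of $R$.

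For the case $\mathfrak{q}\mid m$, I would use $\overline{m+\gamma\bar z}\equiv\bar\gamma z\pmod{\mathfrak{q}}$ to linearize the inner inversion, obtaining
\[
\mathfrak{C}=\sum_{z\in\BF_\mathfrak{q}^\times}\bar\chi(n_1+z)\,\chi(n_2+\alpha\bar\gamma z).
\]
The shift $y=n_1+z$ turns this into a complete one-variable sum of the shape $\sum_{y\neq 0}\bar\chi(y)\chi(y+t)$ minus a boundary term from $y=n_1$. Orthogonality, after a dilation absorbing $t$, evaluates this complete sum as $\mathfrak{q}-1$ when $t\equiv 0\pmod{\mathfrak{q}}$ and $-1$ otherwise, which is precisely $\chi(\alpha\bar\gamma)R_\mathfrak{q}(n_2-n_1\alpha\bar\gamma)$; subtracting the boundary contribution $\chi(n_2\bar n_1)$ then matches the claimed formula.

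For the generic case $\mathfrak{q}\nmid m$ with at least one of $n_1-\bar m\gamma$, $n_2+\bar m\alpha$ nonzero, I would invoke Weil's bound for $\sum_z\chi(R(z))$. The main obstacle I expect is to verify that $R$ is \emph{not} of the form $cH(z)^d$, where $d$ is the order of $\chi$ in $\widehat{\BF_\mathfrak{q}^\times}$. This is by a short case analysis on the two possible degeneracies of $R$: the poles $-n_1$ and $-\gamma/m$ coincide precisely when $n_1-\bar m\gamma\equiv 0$, and the leading coefficient of the numerator vanishes precisely when $n_2m+\alpha\equiv 0$; the hypothesis excludes their simultaneous occurrence, and in each remaining scenario the zero/pole divisor of $R$ fails to be $d$-divisible, so Weil's theorem yields $|\mathfrak{C}|\ll\sqrt{\mathfrak{q}}$. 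Finally, in the fully degenerate case $m\neq 0$, $n_1=\bar m\gamma$, $\alpha=-n_2m$, substitution gives $R(z)=n_1n_2/(z+n_1)^2$, and the constraints $z\neq 0$ and $m+\gamma\bar z\not\equiv 0$ collapse to $z\neq -n_1$; the sum reduces via $v=z+n_1$ to $\chi(n_1n_2)\sum_{v\in\BF_\mathfrak{q}^\times\setminus\{n_1\}}\chi^{-2}(v)$, which splits on whether $\chi^2$ is trivial, producing an explicit multiple of $\mathfrak{q}-1$ in the quadratic case and $-\chi(n_1n_2)\chi^{-2}(n_1)=-\chi(mn_2\bar\gamma)$ in the non-quadratic case, matching the stated formulas.
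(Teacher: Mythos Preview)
The paper does not actually prove this lemma: it merely quotes the result from \cite{AHLS}, so there is no in-paper argument to compare against. Your proposal is a self-contained proof along the standard lines (reduce to $\sum_z\chi(R(z))$, analyze the zero/pole divisor of $R$, invoke Weil in the generic regime, and compute directly in the degenerate regimes), and the case analysis you outline is correct. In particular, your treatment of the $\mathfrak{q}\mid m$ case and of the non-quadratic degenerate case checks out exactly.

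One small point deserves care in the final degenerate case. When $n_1\equiv\bar m\gamma$ and $n_2\equiv-\bar m\alpha$, the summation range is $z\in\BF_\mathfrak{q}^\times\setminus\{-n_1\}$ (both $z\neq0$ and $mz+\gamma\neq0$ are genuine constraints, since $n_1\not\equiv0$), so after $v=z+n_1$ you sum over $v\in\BF_\mathfrak{q}^\times\setminus\{n_1\}$, which has $\mathfrak{q}-2$ elements. Thus in the quadratic case your own formula gives $\chi(n_1n_2)(\mathfrak{q}-2)=\chi(\bar m n_2\gamma)(\mathfrak{q}-2)$, not $\mathfrak{q}-1$; the stated $\mathfrak{q}-1$ appears to be a harmless transcription slip (only the order of magnitude $O(\mathfrak{q})$ is used downstream). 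Your phrase ``the constraints collapse to $z\neq-n_1$'' is misleading on this count, even though you then write the correct range for $v$. Otherwise the argument is complete.
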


Recall $\mathfrak{q}^{\beta}<N\ll \mathfrak{q}^{1+\varepsilon}$ with
$\beta>2/3$ and we will choose $P<\mathfrak{q}^{3/8}$,
so that $R<\mathfrak{q}$ and thus $(n_1n_2,\mathfrak{q})=1$.
Write
$\mathcal{S}_0^{\flat}(N,\CN_0)\ll \Delta_1 + \Delta_2$ and
$\mathcal{S}_1^{\flat}(N,\CN_0)\ll\Sigma_1 + \Sigma_2$.
The contribution of the terms with $m\equiv 0\bmod{\mathfrak{q}}$ is given by
$\Delta_1$ and $\Sigma_1$, and the contribution of the terms with
$m\not\equiv 0\bmod{\mathfrak{q}}$ is given by $\Delta_2$ and $\Sigma_2$,
with $\Delta_i$ and $\Sigma_j$ appropriately defined.

\subsection{$\underline{m\equiv 0\bmod{\mathfrak{q}}}$}
For the sum \eqref{character sum}, Lemma \ref{squareroot 2} shows that
\begin{equation*}
\mathfrak{C}= \begin{cases} O(\mathfrak{q}), & \quad \textit{if }
n_2\ell_1^2p_2\equiv n_1\ell_2^2 p_1 (\bmod \mathfrak{q})\\
O(1), & \quad \mathrm{otherwise}.\\
\end{cases}
\end{equation*}
According to $ n_2\ell_1^2p_2\equiv n_1\ell_2^2 p_1\bmod{\mathfrak{q}}$ or not, we write
\begin{equation*}
\begin{split}
\Delta_1=\Delta_{10}+\Delta_{11} \quad \text{ and } \quad \Sigma_1=\Sigma_{10}+\Sigma_{11},
\end{split}
\end{equation*}
where
\begin{equation*}
\Delta_{10} := \frac{\CN_0}{P^4} \sum_{\ell\in\CL}
\sum_{p_1\in\mathcal{P}}\sum_{p_2\in\mathcal{P}}
\mathop{\underset{\begin{subarray}{c} 1\leq |n_1|\ll R
\\ (n_1,p_1)=1\end{subarray}}{\sum}\quad
\underset{\begin{subarray}{c} 1\leq |n_2|\ll R \\ (n_2,p_2)=1\end{subarray}}{\sum}}
_{n_2p_2\equiv n_1p_1(\bmod \mathfrak{q})}\underset{\begin{subarray}{c}
|m|\ll P^2\mathfrak{q}^{1+\varepsilon}/\CN_0 \\ -\overline{n_1}\ell^2 p_2 +
\overline{n_2}\ell^2 p_1+m\equiv0(\bmod p_1p_2)
\\ m\equiv0(\bmod \mathfrak{q})\end{subarray}}{\sum} \mathfrak{q},
\end{equation*}
and
\begin{equation*}
\begin{split}
\Sigma_{10}:=\frac{\CN_0}{P^4} \mathop{\sum_{\ell_1\in\CL}
\sum_{\ell_2\in\CL}}_{\ell_1\neq \ell_2} \sum_{p_1\in\mathcal{P}}
\sum_{p_2\in\mathcal{P}}\mathop{\underset{\begin{subarray}{c} 1\leq|n_1|\ll R \\ (n_1,p_1)=1\end{subarray}}{\sum}\quad
\underset{\begin{subarray}{c} 1\leq|n_2|\ll R \\ (n_2,p_2)=1\end{subarray}}{\sum}}_
{n_2\ell_1^2p_2\equiv n_1\ell_2^2p_1(\bmod \mathfrak{q})}
\underset{\begin{subarray}{c}|m|\ll P^2\mathfrak{q}^{1+\varepsilon}/\CN_0\\
-\overline{n_1}\ell_1^2 p_2 +\overline{n_2}\ell_2^2 p_1+m\equiv0(\bmod p_1p_2)
\\ m\equiv0(\bmod\mathfrak{q})\end{subarray}}{\sum}\mathfrak{q}.
\end{split}
\end{equation*}
$\Delta_{11}$ and $\Sigma_{11}$ are the other pieces with the congruence condition
$n_2\ell_1^2p_2\nequiv n_1\ell_2^2 p_1\bmod{\mathfrak{q}}$.
We have the following estimates.
\begin{Lemma}$\Sigma_{10}\ll N^{\varepsilon}\frac{\mathfrak{q}^4}{N^2L}$
and $\Delta_{10}\ll N^{\varepsilon}\frac{\mathfrak{q}^4}{N^2L}$. \label{B10}
\end{Lemma}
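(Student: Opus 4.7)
The plan is to exploit a size inequality to upgrade the mod-$\mathfrak{q}$ congruences appearing in $\Delta_{10}$ and $\Sigma_{10}$ to actual equalities in $\BZ$. Substituting $P = N^{1+\varepsilon}L^2/\mathfrak{q}$ and $R = N^\varepsilon P\mathfrak{q}/N$, the hypotheses $L \ll \mathfrak{q}^{1/6-\varepsilon}$ and $N \ll \mathfrak{q}^{1+\varepsilon}$ give $RP \ll N^{O(\varepsilon)}NL^4/\mathfrak{q}$ and $RL^2P \ll N^{O(\varepsilon)}NL^6/\mathfrak{q}$, both of which are $\ll \mathfrak{q}^{1-\varepsilon}$. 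Hence both sides of $n_2p_2\equiv n_1p_1\pmod{\mathfrak{q}}$ (in $\Delta_{10}$) and $n_2\ell_1^2p_2\equiv n_1\ell_2^2p_1\pmod{\mathfrak{q}}$ (in $\Sigma_{10}$) lie in a window shorter than $\mathfrak{q}$, so the congruences must already be genuine equalities in $\BZ$.

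Next, I count solutions of the equalities via prime factorization. For $\Delta_{10}$, the identity $n_1p_1=n_2p_2$ splits into $p_1=p_2$ (forcing $n_1=n_2$, giving $\ll PR$ tuples) or $p_1\neq p_2$ (forcing $n_1=p_2k$, $n_2=p_1k$ with $|k|\leq R/P$, contributing $\ll PR$ tuples as well); combined with the free $\ell\in\CL$, the total count of $(\ell,n_1,n_2,p_1,p_2)$ is $\ll LPR$. For $\Sigma_{10}$, the analogous factorization demands $|k|\leq R/(L^2P) = N^\varepsilon\mathfrak{q}/(NL^2)$, which is strictly less than $1$ since $\beta<1$ forces $NL^2\geq\mathfrak{q}^{2-\beta}>\mathfrak{q}$. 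Hence $p_1\neq p_2$ contributes nothing; for $p_1=p_2=:p$ one has $(n_1,n_2)=(\ell_1^2k,\ell_2^2k)$ with $|k|\leq R/L^2$, for a total of $\ll L^2\cdot P\cdot R/L^2 = PR$ tuples.

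For the $m$-sum, the Chinese remainder theorem with $(p_1p_2,\mathfrak{q})=1$ confines $m$ to a single residue class modulo $p_1p_2\mathfrak{q}$, so the number of admissible $m$ with $|m|\ll P^2\mathfrak{q}^{1+\varepsilon}/\CN_0$ is $\ll 1+\mathfrak{q}^\varepsilon/\CN_0$. Multiplying by $|\mathfrak{C}|\ll\mathfrak{q}$ from Lemma~\ref{squareroot 2} (the $\mathfrak{q}\mid m$ branch with the Ramanujan-sum condition satisfied), by the prefactor $\CN_0/P^4$, and maximising over $\CN_0\ll N^\varepsilon P^2\mathfrak{q}^2/(NL^2)$, I obtain
\bna
\Delta_{10}\ll N^\varepsilon\frac{R\mathfrak{q}^3}{NLP}=N^\varepsilon\frac{\mathfrak{q}^4}{N^2L},\qquad
\Sigma_{10}\ll N^\varepsilon\frac{R\mathfrak{q}^3}{NL^2P}=N^\varepsilon\frac{\mathfrak{q}^4}{N^2L^2}\ll N^\varepsilon\frac{\mathfrak{q}^4}{N^2L},
\ena
where the simplification uses $R/P = N^\varepsilon\mathfrak{q}/N$. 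The main technical obstacle is securing the congruence-to-equality reduction cleanly via the numerology $L \ll \mathfrak{q}^{1/6-\varepsilon}$; once this is in place, the remaining work is routine counting.
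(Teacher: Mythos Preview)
Your proof is correct and follows essentially the same approach as the paper: use the size bound $RL^2P \ll \mathfrak{q}^{1-\varepsilon}$ (the paper phrases this as $P^2L^2 \ll N^{1-\varepsilon}$, but these are equivalent under $P=N^{1+\varepsilon}L^2/\mathfrak{q}$) to turn the congruence mod $\mathfrak{q}$ into a genuine equality, count the resulting tuples, and bound the $m$-sum as essentially a singleton via CRT. Your case split $p_1=p_2$ versus $p_1\neq p_2$ is slightly more explicit than the paper's divisor-style argument (``fixing $\ell_1,p_2,n_2$ fixes $\ell_2,p_1,n_1$'') and in fact yields the marginally stronger $\Sigma_{10}\ll N^\varepsilon\mathfrak{q}^4/(N^2L^2)$, but this extra saving is not needed for the lemma as stated.
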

\begin{proof}
We only estimate $\Sigma_{10}$. The estimation of $\Delta_{10}$
is very similar and simpler.
Suppose
\begin{equation}\label{restriction 3}
\begin{split}
P^2L^2\ll N^{1-\varepsilon}.
\end{split}
\end{equation}
Then the congruence $n_2\ell_1^2p_2\equiv n_1\ell_2^2 p_1(\bmod\,{\mathfrak{q}})$
implies that $n_2\ell_1^2p_2= n_1\ell_2^2 p_1$.
Therefore fixing $\ell_1,p_2,n_2$ fixes $\ell_2,p_1,n_1$ up to
factors of $\log \mathfrak{q}$.

Finally, the congruence conditions on $n_1, n_2$ and $m$ can be combined to write
\begin{equation*}
\begin{split}
-\overline{n_1}\ell_1^2 p_2+\overline{n_2}\ell_2^2p_1+m\equiv 0(\bmod\, p_1p_2\mathfrak{q}).
\end{split}
\end{equation*}
Since $P^2\mathfrak{q}^{1+\varepsilon}/N\ll \mathcal{N}_0$,
the $m$ sum  satisfies $|m|\ll N$,
which is smaller than the size of the modulus $p_1p_2\mathfrak{q}$.
Therefore for fixed $n_i, \ell_i, p_i$, the $m$ sum is at most singleton.
Therefore,
\begin{equation*}
\begin{split}
\Sigma_{10}\ll \frac{\mathcal{N}_0}{P^4}LPR\mathfrak{q}\ll
\frac{\mathfrak{q}^4}{N^2L}.
\end{split}
\end{equation*}
\end{proof}

\begin{Lemma}$\Sigma_{11}\ll N^\varepsilon\frac{\mathfrak{q}^4}{N^3}$ and
$\Delta_{11}\ll N^\varepsilon\frac{\mathfrak{q}^4}{N^3L}$. \label{B11}
\end{Lemma}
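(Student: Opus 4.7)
The plan is to mirror the counting argument used for $\Sigma_{10}$ and $\Delta_{10}$, but to replace the trivial bound $|\mathfrak{C}|\ll\mathfrak{q}$ (which was valid on the resonance locus $n_2\ell_1^2 p_2\equiv n_1\ell_2^2 p_1\pmod{\mathfrak{q}}$) by the generic bound $|\mathfrak{C}|\ll 1$ coming from the $\mathfrak{q}\mid m$ branch of Lemma \ref{squareroot 2} off the resonance locus. The factor $\mathfrak{q}$ saved on the character sum must be balanced against the loss of the diagonal constraint $n_2\ell_1^2 p_2=n_1\ell_2^2 p_1$, which had pinned down two of the summation variables in the previous case.

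Concretely, I would fix $(\ell_1,\ell_2,p_1,p_2)$ and count triples $(n_1,n_2,m)$ subject to $m\equiv 0\pmod{\mathfrak{q}}$, $|m|\ll P^2\mathfrak{q}^{1+\varepsilon}/\mathcal{N}_0$, $|n_i|\ll R$ with $R\asymp N^\varepsilon P\mathfrak{q}/N$, and
\[
-\overline{n_1}\ell_1^2 p_2+\overline{n_2}\ell_2^2 p_1+m\equiv 0\pmod{p_1 p_2}.
\]
Reducing this congruence modulo $p_1$ and modulo $p_2$ separately gives $m\equiv\overline{n_1}\ell_1^2 p_2\pmod{p_1}$ and $m\equiv-\overline{n_2}\ell_2^2 p_1\pmod{p_2}$; in particular $(m,p_1p_2)=1$, and specifying $m$ determines both $n_1\bmod p_1$ and $n_2\bmod p_2$. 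Counting by first enumerating $m$ (of which there are $\ll 1+P^2/\mathcal{N}_0$ multiples of $\mathfrak{q}$ in range) and then picking $n_i$ in its residue class (each with $\ll 1+R/P$ candidates) yields a triple-count of $\ll(1+P^2/\mathcal{N}_0)(1+R/P)^2$. Substituting this into the definition of $\Sigma_{11}$ and $\Delta_{11}$ gives
\[
\Sigma_{11}\ll\frac{\mathcal{N}_0}{P^4}\cdot L^2 P^2\cdot\bigl(1+P^2/\mathcal{N}_0\bigr)\bigl(1+R/P\bigr)^2,
\]
and the same with $L^2$ replaced by $L$ for $\Delta_{11}$. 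Plugging in $R\asymp P\mathfrak{q}/N$ and the dyadic upper bound $\mathcal{N}_0\ll N^\varepsilon P^2\mathfrak{q}^2/(NL^2)$ from \eqref{S star final}, the dominant cross-term $(P^2/\mathcal{N}_0)(R/P)^2=R^2/\mathcal{N}_0$ delivers the claimed estimates $\Sigma_{11}\ll N^\varepsilon\mathfrak{q}^4/N^3$ and $\Delta_{11}\ll N^\varepsilon\mathfrak{q}^4/(N^3L)$.

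The main obstacle I anticipate is the case analysis for the lattice count: the effective triple-count behaves differently in the four regimes determined by the signs of $P^2-\mathcal{N}_0$ (which controls whether many or $O(1)$ multiples of $\mathfrak{q}$ fit in the $m$-range) and of $R-P$ (which controls whether each $n_i$-residue class contributes many points or $O(1)$). One must verify that in every regime the overall contribution is majorized by the desired bound, using the constraints from \eqref{restriction 1} and \eqref{restriction 3}, the upper bound on $\mathcal{N}_0$, and the assumption $P<\mathfrak{q}^{3/8}$. A minor subtlety is that the non-resonance condition $n_2\ell_1^2 p_2\not\equiv n_1\ell_2^2 p_1\pmod{\mathfrak{q}}$ is used only to justify the pointwise bound $|\mathfrak{C}|\ll 1$ and never enters the lattice count, so it may be discarded when counting tuples.
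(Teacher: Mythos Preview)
Your argument is essentially the paper's for the generic case $p_1\neq p_2$: fixing $m$ and using the two residual congruences $n_1\equiv\overline{m}\ell_1^2 p_2\pmod{p_1}$, $n_2\equiv-\overline{m}\ell_2^2 p_1\pmod{p_2}$ to save $P$ in each $n_i$-sum, together with the $\mathfrak{q}$-saving from $\mathfrak{q}\mid m$. The cross-term $(P^2/\mathcal{N}_0)(R/P)^2$ is indeed the one that gives the target $\mathfrak{q}^4/N^3$, and the other three regimes you anticipate are harmless under the standing assumptions.

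However, there is a genuine gap: your deduction ``in particular $(m,p_1p_2)=1$'' is false on the diagonal $p_1=p_2=p$. When $p_1=p_2$, reducing the congruence modulo $p$ gives $m\equiv 0\pmod p$, not a coprimality condition, and then neither residue $n_i\bmod p$ is determined by $m$; your triple-count $(1+P^2/\mathcal{N}_0)(1+R/P)^2$ simply does not apply. The paper splits this case off explicitly: since also $\mathfrak{q}\mid m$, one gets $p\mathfrak{q}\mid m$, and the lower bound $\mathcal{N}_0\geq P^2\mathfrak{q}^{1+\varepsilon}/N$ from \eqref{S star final} forces $|m|\ll P\mathfrak{q}$, hence $m=0$. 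The surviving congruence $\overline{n_1}\ell_1^2\equiv\overline{n_2}\ell_2^2\pmod p$ then pins down $n_2\bmod p$ once $n_1,\ell_1,\ell_2$ are fixed, yielding the diagonal contribution $\frac{\mathcal{N}_0}{P^4}L^2\cdot P\cdot R\cdot(1+R/P)$, which is dominated by the off-diagonal term. You need to insert this case analysis; otherwise the argument as written is incorrect.
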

\begin{proof}
We only estimate $\Sigma_{11}$. The estimation of $\Delta_{11}$
is very similar and simpler.
If $p_1\neq p_2$, then $(m,p_1p_2)=1$, $n_1\equiv \overline{m}\ell_1^2p_2(\bmod{p_1})$
and $n_2\equiv - \overline{m} \ell_2^2 p_1 (\bmod{p_2})$.
These congruence conditions therefore save a factor of $P$ in each $n_i$-sum.
The congruence $m\equiv0\bmod \mathfrak{q}$
saves a factor of at most $\mathfrak{q}$ in the $m$-sum.
If $p_1=p_2=p$, then the congruence conditions imply $p|m$.
Thus $p\mathfrak{q}|m$, since we already have $\mathfrak{q}|m$. Since
$
\mathcal{N}_0>P^{1+\varepsilon},
$
then \eqref{S star final} implies $|m|\ll P^2\mathfrak{q}^{1+\varepsilon}/\CN_0 \ll P\mathfrak{q}$,
which implies that $m=0$.
The remaining congruence condition $n_1\ell_2^2\equiv n_2\ell_1^2(\bmod p)$
shows that fixing $n_1,\ell_2,\ell_1$ saves a factor of $P$ in the $n_2$-sum.
Therefore,
\begin{equation*}
\begin{split}
\Sigma_{11}\ll \frac{\mathcal{N}_0}{P^4}L^2
\bigg[P^2\left(1+\frac{P^2}{\mathcal{N}_0}\right)
\left(\frac{R}{P}\right)^2+P\cdot\frac{R^2}{P}\bigg]\ll \frac{\mathfrak{q}^4}{N^3}.
\end{split}
\end{equation*}
\end{proof}

\subsection{$\underline{m\nequiv 0\bmod{\mathfrak{q}}}$}
Lemma \ref{squareroot 2} shows that
\begin{equation*}
\mathfrak{C}= \begin{cases} O(\mathfrak{q}), & \quad \textit{if }
n_1-\overline{m}\ell_1^2p_2\equiv n_2+\overline{m}\ell_2^2p_1\equiv 0(\bmod \mathfrak{q})\\
O(\mathfrak{q}^{1/2}), & \quad \mathrm{otherwise}.\\
\end{cases}
\end{equation*}
According to $n_1-\overline{m}\ell_1^2p_2\equiv n_2+\overline{m}\ell_2^2p_1\equiv 0(\bmod \mathfrak{q})$ or not,
we write
\begin{equation*}
\begin{split}
\Delta_2 = \Delta_{20} + \Delta_{21} \quad \text{ and } \quad \Sigma_2=\Sigma_{20}+\Sigma_{21},
\end{split}
\end{equation*}
where
\begin{equation*}
\Delta_{20} := \frac{\CN_0}{P^4} \sum_{\ell\in\CL}\sum_{p_1\in\mathcal{P}}\sum_{p_2\in\mathcal{P}}
\underset{\begin{subarray}{c} 1\leq |n_1|\ll R \\ (n_1,p_1)=1\end{subarray}}{\sum}\quad
\underset{\begin{subarray}{c} 1\leq |n_2|\ll R \\ (n_2,p_2)=1\end{subarray}}{\sum}
\underset{\begin{subarray}{c}|m|\ll P^2\mathfrak{q}^{1+\varepsilon}/\CN_0\\
-\overline{n_1}\ell^2 p_2 +\overline{n_2}\ell^2 p_1+m\equiv0(\bmod p_1p_2) \\
m\not\equiv0(\bmod \mathfrak{q}) \\
n_1-\overline{m}\ell^2p_2\equiv n_2+\overline{m}\ell^2p_1\equiv 0
(\bmod \mathfrak{q})\end{subarray}}{\sum} \mathfrak{q},
\end{equation*}
and
\begin{equation*}
\begin{split}
\Sigma_{20}:=\frac{\CN_0}{P^4} \mathop{\sum_{\ell_1\in\CL}
\sum_{\ell_2\in\CL}}_{\ell_1\neq \ell_2} \sum_{p_1\in\mathcal{P}}
\sum_{p_2\in\mathcal{P}}\underset{\begin{subarray}{c} 1\leq|n_1|\ll R \\ (n_1,p_1)=1\end{subarray}}{\sum}\quad
\underset{\begin{subarray}{c} 1\leq|n_2|\ll R \\ (n_2,p_2)=1\end{subarray}}{\sum}
\underset{\begin{subarray}{c}|m|\ll P^2\mathfrak{q}^{1+\varepsilon}/\CN_0 \\
-\overline{n_1}\ell_1^2 p_2 +\overline{n_2}\ell_2^2 p_1+m\equiv0(\bmod p_1p_2) \\
m\not\equiv0(\bmod \mathfrak{q}) \\
n_1-\overline{m}\ell_1^2p_2\equiv n_2+\overline{m}\ell_2^2p_1\equiv 0
(\bmod \mathfrak{q})\end{subarray}}{\sum} \mathfrak{q},
\end{split}
\end{equation*}
$\Delta_{21}$ and $\Sigma_{21}$ are the corresponding other pieces.
We have the following estimates.
\begin{Lemma}$\Sigma_{20}\ll N^\varepsilon\frac{\mathfrak{q}^4}{N^2L}$ and
$\Delta_{20}\ll N^\varepsilon\frac{\mathfrak{q}^4}{N^2L}$. \label{B20}
\end{Lemma}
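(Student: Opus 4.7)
I would only detail $\Sigma_{20}$; as in the preceding lemmas, $\Delta_{20}$ (the $\ell_1=\ell_2$ case) is handled analogously, and is simpler. The plan is to convert the two mod-$\mathfrak{q}$ conditions on $n_1,n_2$ into an integer Diophantine equation whose prime-factorization structure forces a very tight parametrization of the admissible tuples.

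Since $\mathfrak{q}\nmid m$, the congruences $n_1\equiv\overline{m}\ell_1^2 p_2$ and $n_2\equiv-\overline{m}\ell_2^2 p_1\pmod{\mathfrak{q}}$ are equivalent to $mn_1\equiv \ell_1^2 p_2$ and $mn_2\equiv-\ell_2^2 p_1\pmod{\mathfrak{q}}$. Eliminating $m$ yields the compatibility
\begin{equation*}
n_1\ell_2^2 p_1+n_2\ell_1^2 p_2\equiv 0\pmod{\mathfrak{q}}.
\end{equation*}
Under \eqref{restriction 3}, the left-hand side has absolute value $\ll RL^2P=N^\varepsilon P^2L^2\mathfrak{q}/N\ll\mathfrak{q}$, so the congruence lifts to the integer identity $n_1\ell_2^2 p_1+n_2\ell_1^2 p_2=0$. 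Combining $\ell_1\neq\ell_2$, $\mathcal{L}\cap\mathcal{P}=\emptyset$ and unique factorization, either (a) $p_1\neq p_2$, in which case $(n_1,n_2)=(\ell_1^2 p_2 k,-\ell_2^2 p_1 k)$ with $k\neq 0$ and $|k|\ll R/(L^2P)$; or (b) $p_1=p_2=:p$, in which case $(n_1,n_2)=(\ell_1^2 k,-\ell_2^2 k)$ with $(k,p)=1$ and $|k|\ll R/L^2\ll N^\varepsilon$.

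By \eqref{restriction 1}, $NL^2\gg\mathfrak{q}^{1+\varepsilon}$, so $R/(L^2P)\ll N^\varepsilon\mathfrak{q}/(NL^2)<1$; hence case (a) contains no nonzero integer $k$ and drops out, leaving at most $N^\varepsilon L^2P$ admissible tuples $(\ell_1,\ell_2,p,n_1,n_2)$ from case (b). For each such tuple, the mod $p^2$ condition in (i) together with the now fixed residue of $m$ mod $\mathfrak{q}$ pins $m$ to a single class modulo $p^2\mathfrak{q}$; since $|m|\ll P^2\mathfrak{q}/\mathcal{N}_0< p^2\mathfrak{q}$, only $O(1)$ values of $m$ survive. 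Using $|\mathfrak{C}|\ll\mathfrak{q}$ and the maximal size $\mathcal{N}_0\ll P^2\mathfrak{q}^2/(NL^2)$, I expect to obtain
\begin{equation*}
\Sigma_{20}\ll\frac{\mathcal{N}_0\mathfrak{q}}{P^4}\cdot N^\varepsilon L^2P\ll N^\varepsilon\frac{\mathfrak{q}^3}{NP}\ll N^\varepsilon\frac{\mathfrak{q}^4}{N^2L^2}\ll N^\varepsilon\frac{\mathfrak{q}^4}{N^2L},
\end{equation*}
after substituting $P\asymp NL^2/\mathfrak{q}$ from \eqref{restriction 1} and using $L\geq 1$. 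The only delicate point is the divisibility argument ruling out case (a); once case (b) is isolated, the remaining counting is a routine CRT bookkeeping.
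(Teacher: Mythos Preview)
Your argument is correct, but it proceeds along a different line from the paper's. The paper combines the mod-$p_i$ and mod-$\mathfrak{q}$ congruences directly into the two product equalities $mn_1=\ell_1^2p_2$ and $mn_2=-\ell_2^2p_1$ (using $|m|\,R\ll P\mathfrak{q}$ in the range $\mathcal{N}_0\gg P^2\mathfrak{q}^{1+\varepsilon}/N$), and then counts by fixing $\ell_1,p_2$ (which determines $m,n_1$ as complementary divisors) and then $n_2$ (which determines $\ell_2,p_1$); this gives the count $LPR$ and the stated bound $\mathfrak{q}^4/(N^2L)$. You instead eliminate $m$ from the two mod-$\mathfrak{q}$ conditions to obtain the single Diophantine identity $n_1\ell_2^2p_1+n_2\ell_1^2p_2=0$, perform a case split on $p_1=p_2$ versus $p_1\neq p_2$, rule out the latter by the size restriction \eqref{restriction 1}, and only afterwards recover $m$ via CRT from the mod-$p^2$ and mod-$\mathfrak{q}$ residues. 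Your parametrization in the surviving diagonal case $p_1=p_2$ is tighter and in fact yields the sharper intermediate bound $\mathfrak{q}^4/(N^2L^2)$, which you then relax. Both routes are valid; the paper's is more uniform (no case split) while yours extracts a little more from the arithmetic of the identity. One cosmetic point: your reference to ``the mod $p^2$ condition in (i)'' has no antecedent; just say ``the congruence $-\overline{n_1}\ell_1^2p+\overline{n_2}\ell_2^2p+m\equiv 0\pmod{p^2}$''.
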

\begin{proof}
We only estimate $\Sigma_{20}$. The estimation of $\Delta_{20}$
is very similar and simpler.
The congruence conditions on $n_1, n_2$ and $m$ can be combined to write
\begin{equation*}
-\ell_1^2p_2+mn_1\equiv0(\bmod p_1\mathfrak{q}) \quad \text{ and } \quad
\ell_2^2p_1+ mn_2\equiv0(\bmod p_2\mathfrak{q}).
\end{equation*}
If $\mathcal{N}_0\gg P^2\mathfrak{q}^{1+\varepsilon}/N$,
we have $|mR|\ll P\mathfrak{q}$.
The congruence conditions therefore give equalities
\begin{equation*}
\begin{split}
m n_1 = \ell_1^2 p_2 \quad \text{ and } \quad m n_2 = -\ell_2^2 p_1,
\end{split}
\end{equation*}
%$m=\ell_1^2p_2/n_1 = -\ell_2^2 p_1/n_2$ implies
%$\ell_1^2p_2/n_1 = -\ell_2^2 p_1/n_2$.
Therefore fixing $\ell_1,p_2$ fixes $m,n_1$ up to factors
of $\log \mathfrak{q}$. Moreover, for fixed $m$ and $n_2$,
$\ell_2$ and $p_1$ are fixed.
Therefore,
\begin{equation*}
\begin{split}
\Sigma_{20}\ll \frac{\mathcal{N}_0}{P^4}LPR\mathfrak{q}\ll \frac{\mathfrak{q}^4}{N^2L}.
\end{split}
\end{equation*}
\end{proof}

\begin{Lemma}$\Sigma_{21}\ll N^\varepsilon\left(\frac{\mathfrak{q}^{9/2}}{N^3}
+\frac{\mathfrak{q}^{7/2}L^2}{N^2}\right)$ and
$\Delta_{21}\ll N^\varepsilon\left(\frac{\mathfrak{q}^{9/2}}{N^3L}
+\frac{\mathfrak{q}^{7/2}L}{N^2}\right)$. \label{B21}
\end{Lemma}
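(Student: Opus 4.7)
The plan is to mimic the proofs of Lemmas \ref{B10}--\ref{B20}, but now exploiting only the weaker character-sum bound $|\mathfrak{C}|\ll\mathfrak{q}^{1/2}$ from Lemma \ref{squareroot 2}, which is what is available on the complementary range where at least one of $n_1-\overline{m}\ell_1^2 p_2$, $n_2+\overline{m}\ell_2^2 p_1$ is nonzero modulo $\mathfrak{q}$. As in the preceding lemmas, I focus on $\Sigma_{21}$; the estimation of $\Delta_{21}$ is identical except that the outer double sum over $(\ell_1,\ell_2)$ collapses to a single sum over $\ell$, which uniformly accounts for the extra factor $L^{-1}$ in both target summands.

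The decisive step is to split $\Sigma_{21}=\Sigma_{21}^{\neq}+\Sigma_{21}^{=}$ according to whether $p_1\neq p_2$ or $p_1=p_2$. In the off-diagonal range $p_1\neq p_2$, reducing $-\overline{n_1}\ell_1^2 p_2+\overline{n_2}\ell_2^2 p_1+m\equiv0\pmod{p_1 p_2}$ separately modulo $p_1$ and modulo $p_2$ forces $(m,p_1p_2)=1$ and pins $n_1\bmod p_1$ and $n_2\bmod p_2$. Freely choosing $(\ell_1,\ell_2,p_1,p_2,m)$ in their ranges and counting the admissible $(n_1,n_2)$ then produces
\begin{equation*}
\Sigma_{21}^{\neq}\ll\frac{\CN_0}{P^4}\cdot L^2 P^2\cdot\frac{P^2\mathfrak{q}}{\CN_0}\cdot\left(\frac{R}{P}\right)^2\cdot\mathfrak{q}^{1/2}=\frac{L^2R^2\mathfrak{q}^{3/2}}{P^2}=\frac{L^2\mathfrak{q}^{7/2}}{N^2},
\end{equation*}
using $R=P\mathfrak{q}/N$, which furnishes the second term of the target bound.

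The diagonal contribution $\Sigma_{21}^{=}$, coming from $p_1=p_2=p$, is the source of the $\mathfrak{q}^{9/2}/N^3$ term. Here the congruence modulo $p^2$ forces $p\mid m$; writing $m=pm'$ reduces the condition to $m'\equiv\overline{n_1}\ell_1^2-\overline{n_2}\ell_2^2\pmod{p}$, which pins $n_2\bmod p$ once $(n_1,m')$ is chosen. The resulting count splits into
\begin{equation*}
\Sigma_{21}^{=}\ll\frac{L^2\mathfrak{q}^{7/2}}{PN^2}+\frac{\CN_0 L^2\mathfrak{q}^{5/2}}{P^2N^2},
\end{equation*}
the first piece being dominated by $\Sigma_{21}^{\neq}$, and the second becoming $\mathfrak{q}^{9/2}/N^3$ upon inserting the sharp upper bound $\CN_0\ll P^2\mathfrak{q}^2/(NL^2)$ available in the summation range of \eqref{S star final}. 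The main conceptual obstacle is that neither the restriction $m\nequiv 0\pmod{\mathfrak{q}}$ nor the failure of the simultaneous vanishing mod $\mathfrak{q}$ can be further exploited, so the entire $\mathfrak{q}^{9/2}/N^3$ piece must be supplied by the diagonal $p_1=p_2$ combined with the sharp $\CN_0$ range; an off-diagonal treatment alone would give an insufficient bound. Replacing $L^2$ by $L$ throughout this argument yields the stated bound for $\Delta_{21}$.
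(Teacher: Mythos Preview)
Your argument is correct and follows the same approach as the paper: split according to whether $p_1\neq p_2$ or $p_1=p_2$, apply $|\mathfrak{C}|\ll\mathfrak{q}^{1/2}$, and use the congruence $-\overline{n_1}\ell_1^2p_2+\overline{n_2}\ell_2^2p_1+m\equiv0\pmod{p_1p_2}$ to save a factor of $P$ in the $n_i$-counts (off-diagonal) or in the $n_2$-count after writing $m=pm'$ (diagonal). One small correction to your commentary: the count you actually describe for $\Sigma_{21}^{=}$ (fix $(n_1,m')$, then pin $n_2\bmod p$) yields $\mathfrak{q}^{5/2}L^2/(PN)+\mathfrak{q}^{7/2}L^2/(PN^2)$ rather than the formula you display, and your claim that the diagonal is indispensable for supplying $\mathfrak{q}^{9/2}/N^3$ is off---under the standing hypothesis $L^2\gg\mathfrak{q}/N$ from \eqref{restriction 1} one has $\mathfrak{q}^{9/2}/N^3\ll\mathfrak{q}^{7/2}L^2/N^2$, so the off-diagonal term already absorbs it---but neither point affects the correctness of the final bound.
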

\begin{proof}
We only estimate $\Sigma_{21}$. The estimation of $\Delta_{21}$
is very similar and simpler.
When $p_1\neq p_2$,
the congruence $-\overline{n_1}\ell_1^2 p_2+\overline{n_2}
\ell_2^2p_1+m\equiv 0(\bmod p_1p_2)$ implies
that $(m,p_1p_2)=1$. Moreover, for fixed $m$, $p_i$ and $\ell_i$, $i=1,2$,
\begin{equation*}
\begin{split}
n_1\equiv \overline{m}\ell_1^2p_2(\bmod{p_1} )\quad \text{ and } \quad
n_2\equiv -\overline{m}\ell_2^2p_1(\bmod{p_2}).
\end{split}
\end{equation*}
These congruence conditions save a factor of $P$ in each $n_i$-sum.
In case $p_1=p_2=p$, the congruence condition shows $p|m$.
Moreover,  $-\overline{n_1}\ell_1^2 +\overline{n_2}
\ell_2^2+m/p\equiv 0(\bmod p)$.
Therefore, for fixed $n_1$, $\ell_1$, $\ell_2$ and $m$, we can save $P$
in the $n_2$-sum.
\begin{equation*}
\begin{split}
\Sigma_{21}\ll  \frac{\mathcal{N}_0\mathfrak{q}^{1/2}}{P^4}L^2
\bigg[P^2\left(\frac{R}{P}\right)^2\left(1+\frac{P^2\mathfrak{q}}{\CN_0}\right)+P\frac{R^2}{P}
\left(1+\frac{P\mathfrak{q}}{\CN_0}\right)\bigg] \ll
N^\varepsilon\left(\frac{\mathfrak{q}^{9/2}}{N^3}+\frac{\mathfrak{q}^{7/2}L^2}{N^2}\right).
\end{split}
\end{equation*}
\end{proof}

\subsection{Conclusion}
Lemmas \ref{B10}, \ref{B11}, \ref{B20} and \ref{B21} and \eqref{s2NN0} imply
\begin{equation*}
\begin{split}
\mathcal{S}_2^\star(N,\CN_0)\ll N^\varepsilon\left(\frac{\mathfrak{q}^4}{N^2L}
+\frac{\mathfrak{q}^{7/2}L^2}{N^2}+\frac{\mathfrak{q}^{9/2}}{N^3}\right).
\end{split}
\end{equation*}
Inserting this into \eqref{S star final}, we get
\begin{equation*}
\mathcal{S}_2^\star(N)\ll N^\varepsilon\left(\mathfrak{q}^{3/4+\varepsilon}
+ \frac{\mathfrak{q}^{1/2}N^{1/2}}{L^{1/2}}
+ N^{1/2}\mathfrak{q}^{1/4}L\right).
\end{equation*}
Similarly,
\begin{equation*}
\mathcal{S}_2(c=\mathfrak{q})\ll \frac{N^\varepsilon}{P}\left(\mathfrak{q}^{3/4+\varepsilon}
+ \frac{\mathfrak{q}^{1/2}N^{1/2}}{L^{1/2}}
+ N^{1/2}\mathfrak{q}^{1/4}L\right).
\end{equation*}
These estimates combined with \eqref{S(N) and S(N) star} yield
\begin{equation*}
\frac{\mathcal{S}_2^\sharp(N)}{N^{1/2}}\ll N^\varepsilon
\left(\frac{\mathfrak{q}^{3/4+\varepsilon}}{N^{1/2}}
+ \frac{\mathfrak{q}^{1/2}}{L^{1/2}} +\mathfrak{q}^{1/4}L
+\frac{P}{N^{1/2}}\right).
\end{equation*}
We will take $P=N^{1+\varepsilon}L^2/\mathfrak{q}<\mathfrak{q}^{3/8}$. Thus
the first term dominates the last term and
\begin{equation*}
\frac{\mathcal{S}_2^\sharp(N)}{N^{1/2}}\ll\mathfrak{q}^\varepsilon
\left(\frac{\mathfrak{q}^{3/4+\varepsilon}}{N^{1/2}}
+ \frac{\mathfrak{q}^{1/2}}{L^{1/2}} +\mathfrak{q}^{1/4}L\right).
\end{equation*}
By the assumption
$\mathfrak{q}^{\beta}<N\leq \mathfrak{q}^{1+\varepsilon}$,
the second term dominates the first term if we take $L<\mathfrak{q}^{\beta-1/2}$.
Moreover, by the assumption in \eqref{restriction 3}, we assume $L<\mathfrak{q}^{1/6-\varepsilon}$.
Then the assumption $P=N^{1+\varepsilon}L^2/\mathfrak{q}\ll\mathfrak{q}^{3/8}$ is
guaranteed. We conclude that for
$\mathfrak{q}^{(1-\beta)/2+\varepsilon}\ll L\ll \mathfrak{q}^{1/6-\varepsilon}$,
\begin{equation*}
\frac{\mathcal{S}_2^\sharp(N)}{N^{1/2}}\ll\mathfrak{q}^\varepsilon
\left(\frac{\mathfrak{q}^{1/2}}{L^{1/2}} +\mathfrak{q}^{1/4}L\right).
\end{equation*}
Here the lower bound of $L$ comes from the assumption in \eqref{restriction 1}.
That proves Proposition \ref{cusp form prop}.

\begin{Remark}
For $\mathcal{S}_1^\sharp(N)$, the restriction on $L$ will be
$ \mathfrak{q}^{1-\beta+\varepsilon}\ll
L\ll \mathfrak{q}^{\min\{\beta-1/2,1/3\}-\varepsilon}$.
\end{Remark}

\subsection*{Acknowledgments.}
We thank Yongxiao Lin for many valuable suggestions.
The authors are partially supported by National Natural Science Foundation
of China (Grant No. 11871306).

\end{document}